\def\myrefs#1#2{ 
{\bigskip \noindent
{\Large \bf #2}  
 \list {[\arabic{enumi}]}{\settowidth\labelwidth{[#1]}
 \leftmargin\labelwidth 
 \advance\leftmargin\labelsep
 \usecounter{enumi} }  
 \def\newblock{\hskip .11em plus .33em minus .07em}
 \sloppy\clubpenalty4000\widowpenalty4000
 \sfcode`\.=1000\relax}  }
\def\inv{^{-1}}%
\def\nref#1{(\ref{#1})}
\def\comb#1,#2,{ \left( {#1 \atop #2 } \right)  }%
\def\prodd#1,#2,#3,{ \prod_{\scriptstyle #1 \atop\scriptstyle #2 }^{#3} }%
\def\summ#1,#2,#3,{ \sum_{\scriptstyle #1 \atop\scriptstyle #2 }^{#3} }%
\def\RR{\mathbb{R}}
\newtheorem{algor}{{\sc Algorithm}}[section]
\newtheorem{tabl}{Table}[section]
\def\betab{\begin{tabbing} 
xxxx\=xxxx\=xxx\=xx\=xx\=xx\=xx\=xx\=xx\=xx\=xx\=xx\=xx\= \kill} 
\def\entab{\end{tabbing}\vspace{-0.12in}}
\newcommand{\eq}[1]{\begin{equation}\label{#1}}
\newcommand{\en}{\end{equation}}
\newcommand{\beeq}[1]{\begin{equation}\label{#1}}
\newcommand{\eneq}{\end{equation}}
\def\HH{\mathcal{H}}
\def\RR{\mathbb{R}}
\def\tF{\texttt{F}}
\def\inv{^{-1}}%
\def\nref#1{(\ref{#1})}
\providecommand{\norm}[1]{\lVert#1\rVert}
\newcommand{\xpmatrix}[1]{\begin{pmatrix} #1 \end{pmatrix}}
\title{Low-rank correction methods for algebraic 
domain decomposition preconditioners
\thanks{This work was supported  by NSF under grant NSF/DMS-1216366.}}
\author{Ruipeng Li 
\thanks{Address: Computer Science \& Engineering,
        University of Minnesota, Twin Cities. 
{\tt \{rli,saad\} @cs.umn.edu}}
\and Yousef Saad\footnotemark[2]} 
\begin{document} 

\maketitle 

\begin{abstract}
This paper presents a parallel preconditioning method for distributed sparse
linear systems, based on an approximate inverse of the original matrix, that
adopts a general framework of distributed sparse matrices and exploits the
domain decomposition method and low-rank corrections.
The domain decomposition approach decouples the matrix and once inverted,
a low-rank approximation is applied by exploiting the Sherman-Morrison-Woodbury
formula,
which yields two variants of the preconditioning methods. The low-rank expansion
is computed by the Lanczos procedure with reorthogonalizations.
Numerical experiments indicate that, when combined with
Krylov subspace accelerators, this preconditioner can be efficient and robust
for solving symmetric sparse linear systems. Comparisons with other
distributed-memory preconditioning methods are presented.

\end{abstract}

\begin{keywords} 
Sherman-Morrison-Woodbury formula, low-rank approximation, distributed sparse
linear systems, parallel
  preconditioner, incomplete LU factorization, Krylov subspace method,
  domain decomposition
\end{keywords}

\section{Introduction} 

Preconditioning distributed sparse linear systems remains a challenging
problem  in   high-performance  multi-processor  environments.  Simple
domain decomposition (DD) algorithms such  as the additive Schwarz method
\cite{Dryja91additiveschwarz,ASMbook,RAS,NLA:NLA80,Smith96}  are widely  used
and
they usually  yield good  parallelism.  A  well-known problem
with these preconditioners  is that they often require  a large number
of iterations when  the number of domains used is  large. As a result,
the  benefits of  increased  parallelism is often  outweighed by  the
increased  number of  iterations.  Algebraic  MultiGrid  (AMG) methods
have  achieved a  good success  and can  be extremely  fast  when they
work. However,  their success is still somewhat  restricted to certain
types  of problems.   Methods  based on  the  Schur  complement
technique such as  the parallel Algebraic  Recursive Multilevel  Solver 
(pARMS)~\cite{NLAA:pARMS}, 
which consist  of  eliminating interior  unknowns
first and then  focus on solving in some  ways the interface unknowns,
in the reduced system,
are designed to be general-purpose. The difficulty in this type of
methods is to find  effective  and  efficient  preconditioners for  the
distributed global reduced system.
In the  approach  proposed in  the
present  work, we  do not  try to  solve the  global  Schur complement
system exactly or even  form it. Instead, we   exploit the 
Sherman-Morrison-Woodbury (SMW) formula and a low-rank 
property to define  an   approximate    inverse   type   preconditioner.

Low-rank approximations have recently gained popularity as a means to
compute preconditioners. For instance,  LU factorizations or inverse matrices
using  the $\HH$-matrix format or the closely related Hierarchically Semi-Separable (HSS) matrix format rely on representing certain
off-diagonal blocks by low-rank matrices
\cite{EngquistYing,LeBorne:2003,LeBorneGras06,XiaChanGuLu10,XiaGu10}.
The main idea of this work is inspired by the recursive Multilevel Low-Rank
(MLR) preconditioner \cite{MLR-1} targeted at SIMD-type parallel machines
such as  those equipped with Graphic Processing Units (GPUs), where traditional ILU-type
preconditioners have difficulty reaching good performance \cite{RliSaadGPU}.
Here, we adapt and extend this idea to the framework of distributed sparse
matrices via  DD methods.
We refer to a preconditioner obtained by this approach
as a DD based Low-Rank (DDLR) preconditioner.
This paper considers only symmetric matrices. Extensions to the nonsymmetric
case are possible and will be explored in our future work.
The paper is organized as follows: 
In Section~\ref{sec:backg}, we briefly introduce the
distributed sparse linear systems and discuss
the domain decomposition framework. Section~\ref{sec:DD}  presents the two
proposed strategies
for using low-rank approximations in the SMW  formula.
Parallel implementation details are presented in Section~\ref{sec:ParImp}. Numerical
results of model problems and general symmetric linear systems are presented in
Section ~\ref{sec:exp}, and we conclude in Section~\ref{sec:concl}.

\section{Background: distributed sparse linear systems}\label{sec:backg}
The parallel solution of a linear systems of the form
\begin{equation}
Ax = b, \label{eq:SYST0}
\end{equation}  
where $A$ is an $n \times n$ large sparse \emph{symmetric} matrix, 
typically begins by subdividing the problem into 
$p$ parts with the help of a graph partitioner
\cite{Aykanat99,CHACO,METIS-SIAM,kolda98partitioning,SCOTCH,Simon-RSB}.
Generally, this consists of assigning  sets of equations along
with the corresponding right-hand side values 
to subdomains.
If equation number $i$ is assigned to a given subdomain,
then it is common to also assign unknown number $i$ to the same
subdomain.
Thus, each  process holds a set of
equations (rows of the linear system) and vector components 
associated with these rows. 
This viewpoint is prevalent  when taking a purely algebraic viewpoint
for solving systems of equations that arise from Partial Differential
Equations (PDEs) or general unstructured sparse matrices.

\subsection{The local systems}
In this paper we partition the problem using an \emph{edge separator} 
as is done in the pARMS method  for example.
As shown in Figure \ref{fig:locsys}, once a graph is  partitioned, three types
of unknowns appear:
(1) Interior unknowns  that are coupled  only with
local unknowns; (2) Local  interface unknowns  that are coupled  with
both external  and local  unknowns; and (3)   External
interface unknowns that belong  to  other subdomains and  are  coupled
with local interface unknowns.
\begin{figure}[ht]
\hfill
\begin{minipage}{0.3\textwidth}
  \includegraphics[width=0.95\textwidth]{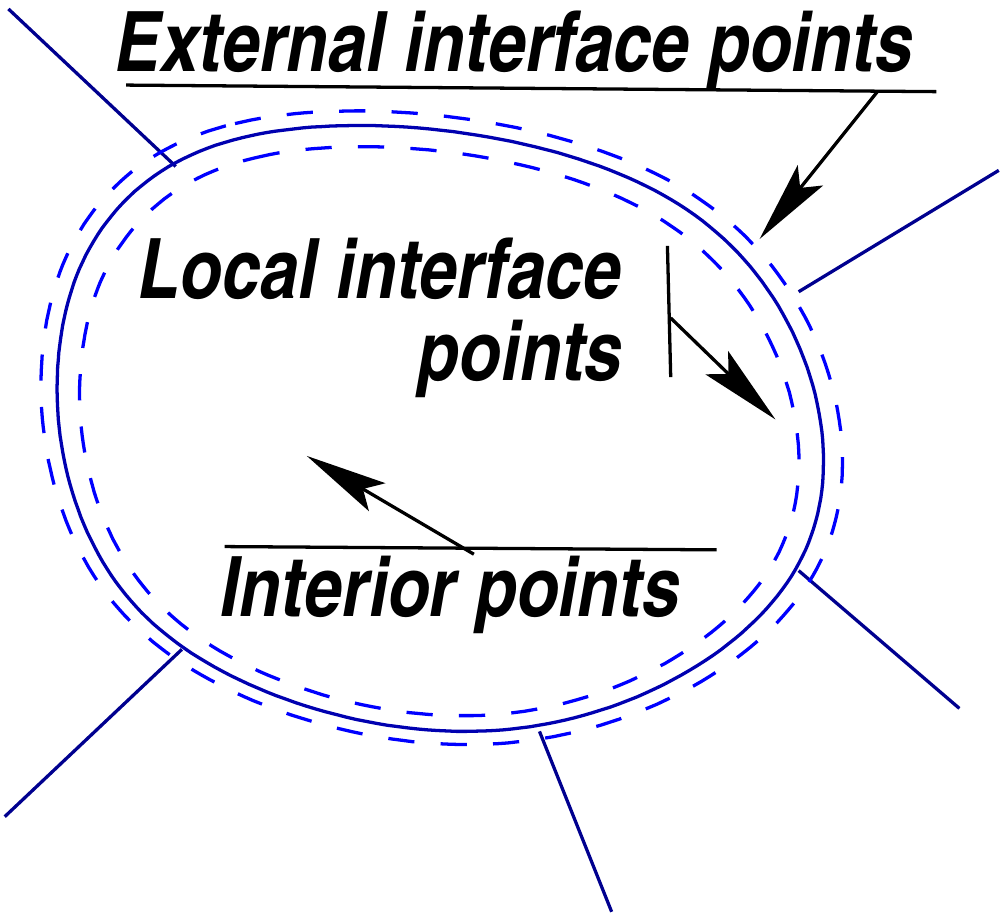} 
\end{minipage} \hfill
\begin{minipage}{0.5\textwidth}
  \hspace{0.04\textwidth}
  \includegraphics[width=0.95\textwidth]{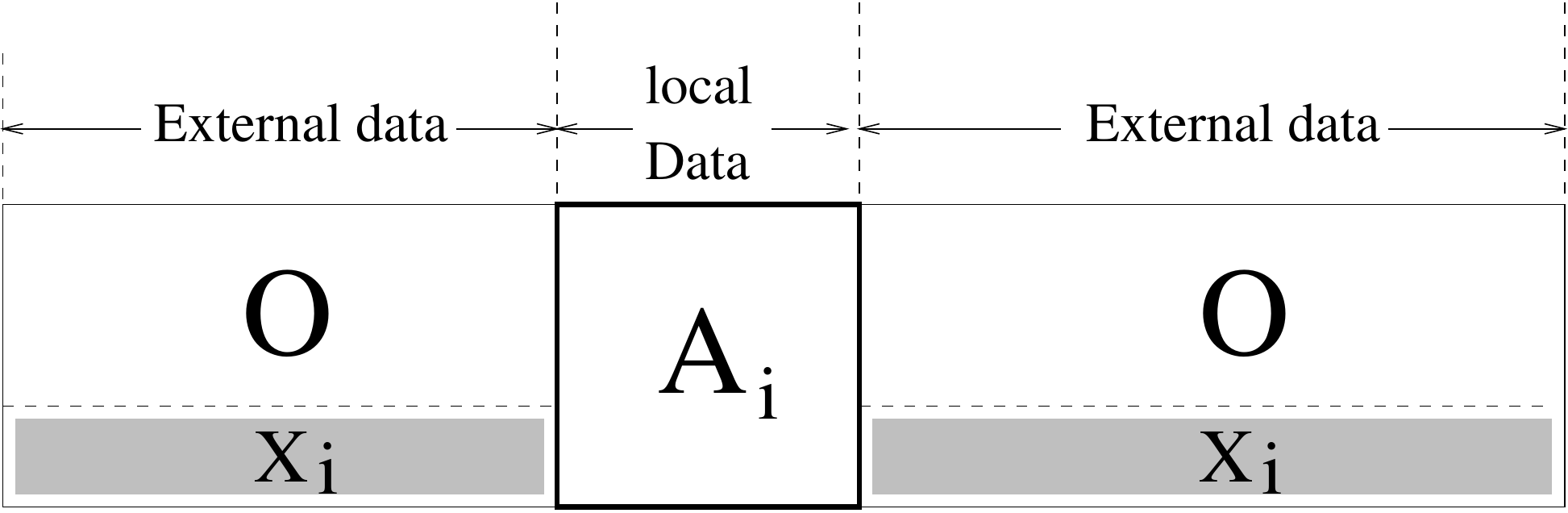} 
\end{minipage} 
\hfill \mbox{\ } 
  \caption{A local view of a distributed sparse matrix 
  (left) and its matrix representation (right).
  \label{fig:locsys}}
\end{figure}%
The rows of the matrix assigned to subdomain
$i$ can be split into two parts: a local matrix $A_i$ that acts on the local
unknowns and an interface matrix $X_i$ that acts on the external interface
unknowns. Local unknowns 
in each subdomain are reordered such that the interface 
unknowns  are listed after the interior ones.
Thus, each vector of local unknowns $x_i$ is split
into  two parts: a subvector $u_i$ of the internal  components
followed by a subvector $y_i$ of  the local interface  components.
The   right-hand-side vector $b_i$ is conformingly split into  subvectors
$f_i$ and $g_i$.
When the blocks are partitioned according to this splitting, the local 
system of equations can be written as
\begin{equation}
\underbrace{\xpmatrix{
 B_i  &   E_i \cr 
 E_i^T  &   C_i }}_{A_i} 
\underbrace{ 
 \xpmatrix{ u_i \cr y_i}}_{x_i} 
 + 
 \xpmatrix{ 0 \cr     \sum_{j \in N_i} E_{ij} y_j } 
 = 
 \underbrace{\xpmatrix{ f_i \cr g_i}}_{b_i} . \label{eq:locsys2} 
\end{equation}
Here,   $N_i$  is a set  of the indices of the
subdomains that are neighbors  to  subdomain   $i$.
The term  $E_{ij} y_j$ is a part of  the product 
which reflects the contribution  to the local equations from
the neighboring  subdomain  $j$. 
The  result of  this multiplication affects
only the local interface equations, which is indicated by the zero in 
the top part of the second  term  of   the left-hand side    of
\nref{eq:locsys2}.

\subsection{The interface and Schur complement matrices} \label{sec:Schur} 
The local system \nref{eq:locsys2} is naturally split in two
parts: the first part represented by the term
$A_i x_i$ involves only the local unknowns and the second part
contains the couplings between the local interface unknowns and
the external interface unknowns. Furthermore, the second row of the
equations in \nref{eq:locsys2}
\begin{equation}
 E_i^T u_i +  C_i y_i +  \sum_{j \in N_i} E_{ij} y_j = g_i, \label{eq:localsys}
 \end{equation}
defines both the inner-domain and the inter-domain couplings. It couples
the interior unknowns $u_i$ with the local interface unknowns $y_i$ and
the external ones, $y_j$.
An alternative way to order a global system is to group the interior unknowns
of all the subdomains together and all the interface unknowns
together as well.
The action of the operation on the left-hand side of \eqref{eq:localsys} on the
vector of all interface unknowns, i.e., the vector
$y^T = [y_1^T, y_2^T, \cdots, y_p^T]$, 
can be gathered into the following matrix $C$,
\eq{eq:Cmat} 
C \ = \ 
\xpmatrix{
C_1      & E_{12}    &   \ldots    &   E_{1p} \cr
E_{21}   & C_2       &   \ldots    & E_{2p} \cr
\vdots   &           &   \ddots    &  \vdots \cr
E_{p1}   & E_{p,2}   &   \ldots    & C_p 
}.
\en
Thus, if
we reorder the equations so that
the $u_i$'s are listed first followed by the $y_i$'s, we obtain
a global system which has the following form: 
\eq{eq:Bmat} 
\left(
\begin{array}{cccc|c}
B_1      &        &   &       &  \hat{E}_{1 } \cr
         & B_2    &   &       & \hat{E}_{2 } \cr
   &        &   \ddots &  &  \vdots \cr
         &        &   &   B_p       & \hat{E}_p \cr
         \hline
 \hat{E}_1^T    & \hat{E}_2^T   &   \ldots &  \hat{E}_p^T   &   C      \cr
\end{array}
\right)
\xpmatrix{
u_1 \cr u_2 \cr \vdots \cr u_p \cr y}  =
\xpmatrix{
f_1 \cr f_2 \cr \vdots \cr f_p \cr g},
\en
where $\hat E_i$ is expanded from $E_i$ by adding
zeros and on the right-hand side, $g^T = [g_1^T, g_2^T, \cdots, g_p^T]$. 
Writing the system in the form \eqref{eq:locsys2} is commonly adopted in practice 
when solving distributed sparse linear systems, while the form
\eqref{eq:Bmat} is more convenient for analysis. In what follows, we will assume
that the global matrix is put in the form of \eqref{eq:Bmat}.
The  form  \eqref{eq:locsys2} will return
in the discussions of Section \ref{sec:ParImp}, which deal with the parallel
implementations.

We will assume that  each subdomain $i$ has $d_i$ interior
unknowns and  $s_i$ interface unknowns, i.e., the length of
$u_i$ is $d_i$ and that of $y_i$ is $s_i$. We will denote by
$s$ the size of $y$, i.e., $s = s_1+s_2+\cdots+s_p$.
With this notation,  each $E_i$ 
is a matrix of size $d_i \times s_i$. The expanded version of this
matrix, $\hat E_i$ is of size $d_i \times s$ and
its columns outside of those corresponding to the unknowns in  
$y_i$ are zero. 
An illustration for $4$ subdomains is shown in Figure~\ref{fig:Bmat}.

\begin{figure}[h!]
\center
\subfigure {
\includegraphics[width=0.4\textwidth]{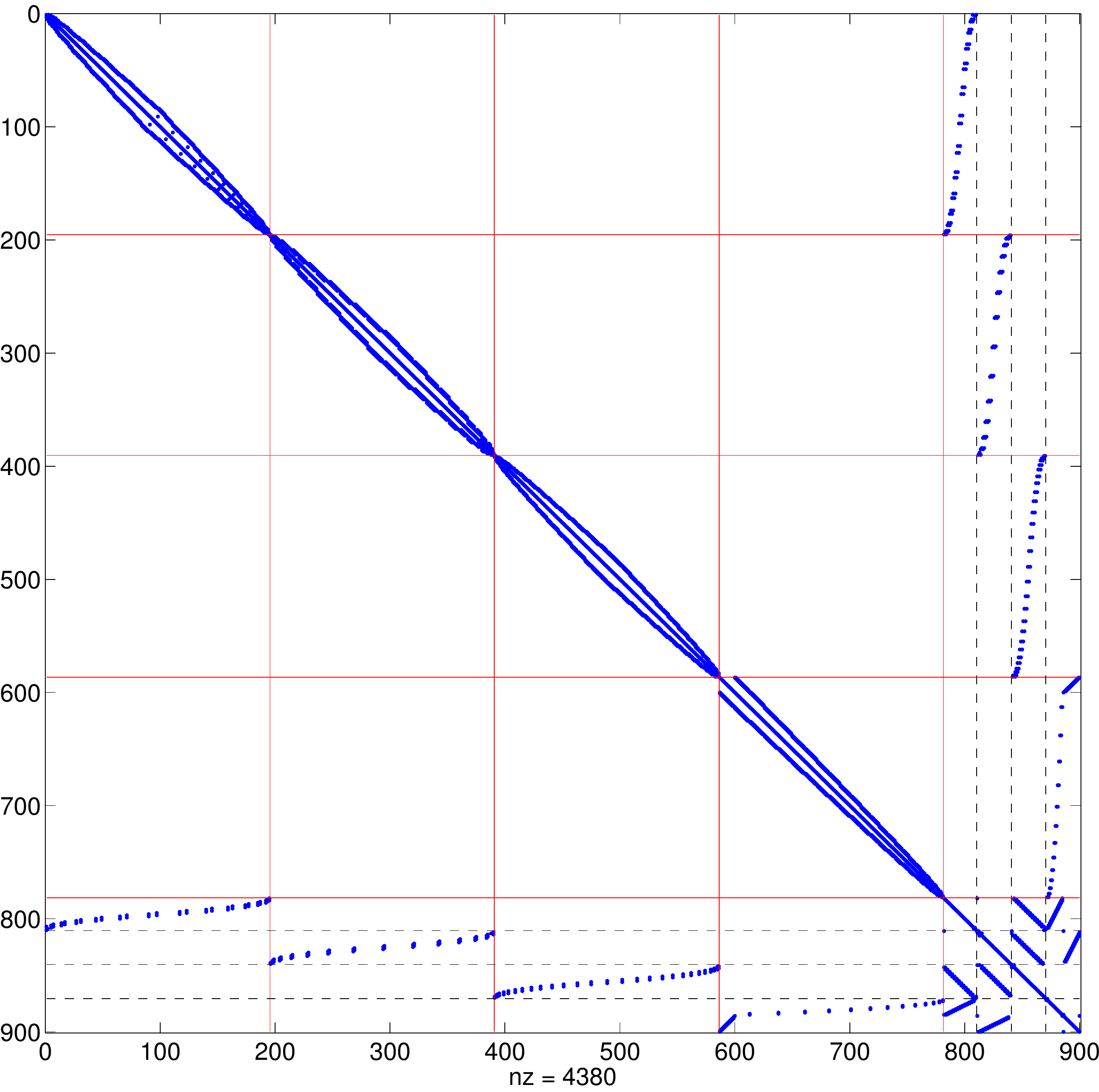}
}
\hspace*{2em}
\subfigure {
\includegraphics[width=0.4\textwidth]{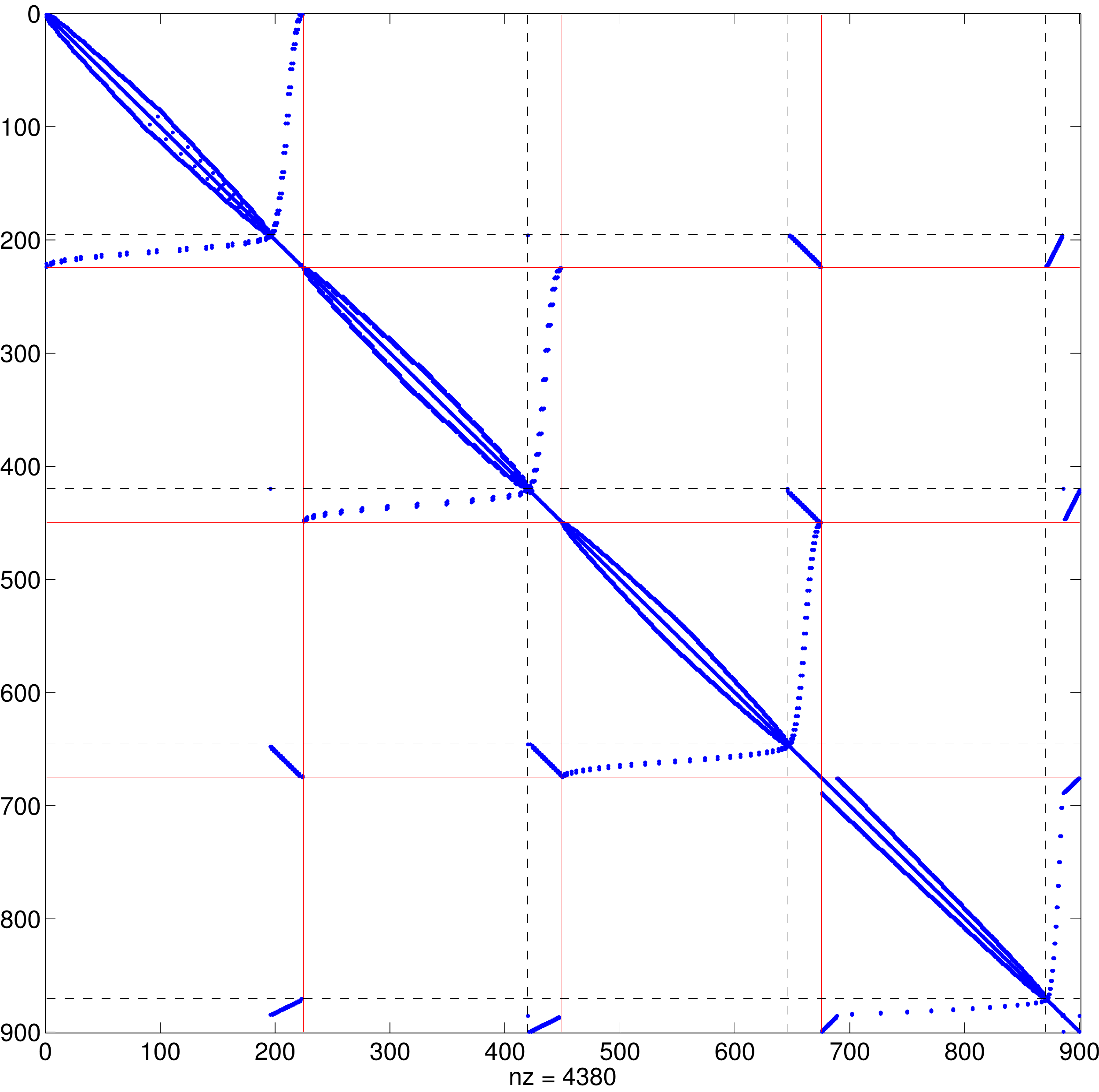}
}
\caption{An example of a 2-D Laplacian matrix which is partitioned into $4$ subdomains and reordered according to \nref{eq:Bmat} (left) and \nref{eq:locsys2} (right) respectively. \label{fig:Bmat}}
\end{figure}



A popular way of solving a global system  put into the form
of \eqref{eq:Bmat} is to exploit the Schur complement techniques
that eliminate the interior
unknowns $u_i$ first and then focus on solving in some
way for the interface unknowns. The interior unknowns can then be easily
recovered  by back substitution.  Assuming that $B_i$  is nonsingular,
$u_i$ in \eqref{eq:locsys2} can be
eliminated by means of the first equation:  $u_i=B_i\inv(f_i-E_iy_i)$ 
which yields, upon substitution in \eqref{eq:localsys},
\begin{equation}
  S_i y_i +  \sum_{j \in N_i} E_{ij} y_j = g_i - E_i^TB_i\inv f_i \equiv
g'_i, \label{eq:localschursys}
\end{equation}
in which $S_i$ is the \emph{local} Schur complement,
\begin{equation}
S_i = C_i - E_i^T B_i \inv E_i. \notag
\end{equation}
When written for each subdomain $i$, \eqref{eq:localschursys} yields the
\emph{global} Schur complement system that involves only the interface unknown
vectors $y_i$ and the reduced system  has  a natural  block  structure,
\begin{equation}
\xpmatrix{
S_1      & E_{12}    &   \ldots   &   E_{1p} \cr
E_{21}   & S_2       &  \ldots    & E_{2p} \cr
\vdots   &           &   \ddots   &  \vdots \cr
E_{p1}   & E_{p,2} & \ldots     & S_p
}
\xpmatrix{ y_1 \cr y_2 \cr \vdots \cr y_p}
=
\xpmatrix{ g'_1 \cr g'_2 \cr \vdots \cr g'_p}
.
\label{eq:SchSys}
\end{equation}
Each of the diagonal blocks in  this  system  is the local Schur complement
matrix $S_i$, which is
dense in general.  The off-diagonal blocks  $E_{ij}$
 are identical with those of the local system
\nref{eq:Cmat} and are sparse.
A  key idea    here is to
(approximately) solve the reduced
system \nref{eq:SchSys} efficiently.
For example, in pARMS~\cite{NLAA:pARMS}  efficient preconditioners
are developed based on forming an approximation to the Schur complement
system and then approximately solving \nref{eq:SchSys} and then extracting
the  internal unknowns $u_i$. This defines a  preconditioning operation for the 
global system.
In the method proposed in this paper we do not try to solve the global Schur complement 
system or even form it. Instead,  an approximate inverse preconditioner to the original matrix is
obtained by exploiting a low-rank property and the SMW  formula.

\section{Domain decomposition with local low-rank  corrections}\label{sec:DD}
The coefficient matrix of the system \eqref{eq:Bmat} is of the form
\eq{eq:Global}
A \equiv 
\begin{pmatrix}
B   & \hat E \cr \hat E^T & C \end{pmatrix},
\en
where $B \in \RR^{m \times m}$, $\hat E \in \RR^{m \times s}$ and $C
\in \RR^{s \times s}$. Here, we abuse notation by using the same symbol $A$
to represent the permuted version of the matrix in \nref{eq:SYST0}.
The goal of
this  section  is to build a preconditioner for the matrix
\nref{eq:Global}.
 
\subsection{Splitting} 
We begin by splitting  matrix $A$ as follows
\eq{eq:splitA} 
A =
\begin{pmatrix} 
B & \hat  E \cr
\hat E^T & C 
\end{pmatrix} 
= 
\begin{pmatrix} 
B &     \cr
     & C 
\end{pmatrix} 
+ \begin{pmatrix} 
  & \hat  E \cr
\hat E^T &   
\end{pmatrix}, 
\en
and defining the $n \times s$ matrix,
\eq{eq:EF}
E \equiv 
\begin{pmatrix} 
\alpha^{-1} \hat E \cr - \alpha I
\end{pmatrix},
\en
where $I$ is the $s \times s$ identity matrix and $\alpha$ is a parameter. Then
from \nref{eq:splitA} we
immediately get the identity,
\eq{eq:splitA1}
\left[\begin{array}{c|c}
B        & \hat  E \cr \hline
\hat E^T & C  \end{array} \right]
= 
\left[\begin{array}{c|c}
B + \alpha^{-2} \hat E \hat E^T  &  0    \cr \hline
  0   & C + \alpha^2 I 
\end{array} \right]  - E E^T.
\en
A remarkable property is that the operator $\hat E \hat E^T $ is 
\emph{local} in that it does not involve inter-domain couplings.
Specifically, we have the following proposition.
\begin{proposition}
Consider  
the matrix $X = \hat E \hat E^T$ and its blocks $X_{ij}$ associated
with the same blocking as for the matrix in \nref{eq:Bmat}.
Then, for $1 \le i,j \le p$ we have: 
\begin{align}
X_{ij} &=  0,  \quad \mathrm{for} \quad i \ne j \notag \\
X_{ii} &=  E_i E_i^T.   \notag
\end{align}
\end{proposition}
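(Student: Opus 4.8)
The plan is to reduce everything to the block-column sparsity pattern of the matrices $\hat E_i$. First I would recall from the construction of \nref{eq:Bmat} that the rectangular block $\hat E$ is the vertical stack of the blocks $\hat E_1,\ldots,\hat E_p$, so that the $(i,j)$ block of $X=\hat E\hat E^T$, taken with respect to the blocking of \nref{eq:Bmat}, is simply $X_{ij}=\hat E_i\hat E_j^T$. Hence it suffices to analyze products of the form $\hat E_i\hat E_j^T$.

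Next I would partition the $s$ columns of each $\hat E_i$ according to the splitting $s=s_1+\cdots+s_p$ of the interface unknowns, writing $\hat E_i=\bigl[(\hat E_i)^{(1)}\ \cdots\ (\hat E_i)^{(p)}\bigr]$ with $(\hat E_i)^{(k)}\in\RR^{d_i\times s_k}$. By construction (the paragraph following \nref{eq:Bmat}), $\hat E_i$ is obtained from $E_i$ by inserting zero columns everywhere except in the column block associated with $y_i$; that is, $(\hat E_i)^{(k)}=0$ for $k\ne i$ and $(\hat E_i)^{(i)}=E_i$. This is exactly where the use of an edge separator enters: the local interface unknown sets $y_1,\ldots,y_p$ are disjoint and together constitute $y$, so each $E_i$ occupies its own column block.

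Then I would expand the block product column-block by column-block,
\[
X_{ij}=\hat E_i\hat E_j^T=\sum_{k=1}^{p}(\hat E_i)^{(k)}\bigl((\hat E_j)^{(k)}\bigr)^T .
\]
Every summand carries the factor $(\hat E_i)^{(k)}$, which vanishes unless $k=i$, and the factor $\bigl((\hat E_j)^{(k)}\bigr)^T$, which vanishes unless $k=j$. Thus for $i\ne j$ every term is zero and $X_{ij}=0$; for $i=j$ the only surviving term is $k=i$, giving $X_{ii}=(\hat E_i)^{(i)}\bigl((\hat E_i)^{(i)}\bigr)^T=E_iE_i^T$, which is the claim.

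There is no genuine obstacle here; the argument is essentially bookkeeping. The single point that must be stated carefully is the sparsity pattern of $\hat E_i$ — that its nonzero columns are precisely those indexing $y_i$ — which is a direct consequence of the reordering producing \nref{eq:Bmat} together with the fact that the partition comes from an edge separator. Once that is in place, the proposition follows from the one-line column expansion of $\hat E_i\hat E_j^T$ above.
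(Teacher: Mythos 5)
Your proof is correct and follows essentially the same route as the paper, which simply observes that the columns of $\hat E$ associated with different subdomains are structurally orthogonal; your column-block expansion of $\hat E_i \hat E_j^T$ is just that observation written out in full detail.
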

\begin{proof}
This follows from the fact that the columns of $\hat
E$ associated with different subdomains are structurally orthogonal illustrated
on the left side of Figure~\ref{fig:Bmat}.
\end{proof}

Thus, we can write 
\eq{eq:split} 
A = 
A_0 - E E^T, \quad 
A_0 =
\begin{pmatrix} 
B + \alpha^{-2} \hat E \hat E^T  &        \cr
    &  C + \alpha^2 I  
\end{pmatrix}  \in \mathbb{R}^{n \times n},
\en 
with the matrix $E$ defined in \nref{eq:EF}.
From \nref{eq:split} and the SMW formula,
we can derive the expression for the inverse of $A$.
First define,
\eq{eq:Gmat} 
G = I  - E^T A_0 \inv E .
\en 
Then, we have  
\eq{eq:split3} 
A\inv  =
 A_0\inv  + A_0\inv E 
(\underbrace{I  - E^T A_0 \inv E}_{G} )\inv E^T A_0\inv 
\equiv 
 A_0\inv  + A_0\inv E G\inv E^T A_0\inv . 
\en 
Note that the matrix $C$ is often \emph{strongly diagonally dominant} for
matrices arising from the discretization of PDEs,  and the
parameter $\alpha $ can serve to  improve  diagonal dominance 
in the indefinite cases.

\subsection{Low-rank approximation to the $\mathbf{G}$ matrix} \label{sec:one-sided}
In this section we will consider the case when $A$ is symmetric positive definite (SPD).
A preconditioner of the form 
\eq{eq:MappG} \notag
M\inv = A_0\inv + (A_0\inv E) \tilde G\inv (E^T A_0\inv) 
\en
can be readily obtained from \nref{eq:split3} if we had an approximation
$\tilde G\inv$ to $G\inv$. 
Note that the application of this preconditioner will involve two
solves with $A_0$ instead of only one. It will also involve a solve with
$\tilde G$ which operates  on the interface unknowns.
Let us, at least formally, assume that we know  the spectral factorization of
$E^TA_0\inv E$
\begin{equation} \notag 
H \equiv E^T A_0\inv E = U \Lambda U^T,
\end{equation}
where $H \in \RR^{s\times s}$, $U$ is unitary,
and $\Lambda$ is diagonal.
From \eqref{eq:split} we have $A_0=A+EE^T$, and thus $A_0$ is SPD
since $A$ is SPD.
Therefore, $H$ is at least symmetric positive semidefinite 
(SPSD) 
and the
following lemma
shows that its eigenvalues  are all less than one.

\begin{lemma} \label{lem:eigH}
Let
$H=E^{T}A_{0}^{-1}E$. Assume that $A$
is SPD and the matrix $I-H$ is nonsingular. Then we have $0\leq\lambda<1$,
for each eigenvalue  $\lambda$ of $H$.
\end{lemma}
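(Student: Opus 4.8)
The plan is to establish two facts about $H = E^T A_0^{-1} E$: first that $H \succeq 0$, and second that each eigenvalue is strictly below $1$. The first is immediate since $A_0$ is SPD (as $A_0 = A + EE^T$ with $A$ SPD and $EE^T \succeq 0$), so $A_0^{-1}$ is SPD and hence $H = E^T A_0^{-1} E \succeq 0$; thus $\lambda \geq 0$ for every eigenvalue. The work is in the strict upper bound, and the natural tool is the very identity \nref{eq:split}, namely $A = A_0 - EE^T$.

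\textbf{Key steps.} I would argue by contradiction or directly via a Rayleigh-quotient manipulation. Suppose $\lambda$ is an eigenvalue of $H$ with eigenvector $u \neq 0$, so $E^T A_0^{-1} E u = \lambda u$. Set $v = A_0^{-1/2} E u$ (using the SPD square root of $A_0^{-1}$), so that $\|v\|^2 = u^T E^T A_0^{-1} E u = \lambda \|u\|^2$. Now consider the vector $w = A_0^{-1} E u$ and evaluate the quadratic form of $A = A_0 - EE^T$ at $w$. Since $A$ is SPD, $w^T A w > 0$ unless $w = 0$; one computes $w^T A_0 w = u^T E^T A_0^{-1} E u = \lambda \|u\|^2$ and $w^T E E^T w = u^T E^T A_0^{-1} E E^T A_0^{-1} E u = u^T H^2 u = \lambda^2 \|u\|^2$, so $0 < w^T A w = \lambda \|u\|^2 - \lambda^2 \|u\|^2 = \lambda(1-\lambda)\|u\|^2$, forcing $\lambda(1-\lambda) > 0$, i.e. $0 < \lambda < 1$. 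The case $w = 0$ would give $A_0^{-1} E u = 0$, hence $Eu = 0$, hence $\lambda u = E^T A_0^{-1} E u = 0$, so $\lambda = 0$, which is consistent with $0 \le \lambda < 1$; and the eigenvalue $\lambda = 0$ itself satisfies the claimed bound. I should also invoke the hypothesis that $I - H$ is nonsingular to rule out $\lambda = 1$ as a degenerate possibility if the argument above is routed differently, though the computation above already excludes it.

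\textbf{Main obstacle.} The only subtlety is handling the boundary cases cleanly: one must be careful that $w$ can vanish (when $Eu=0$), and that the chain of equalities $w^T E E^T w = u^T H^2 u$ uses symmetry of $H$ and $A_0^{-1}$ correctly. A slightly cleaner route avoiding case analysis is to work directly with the factorization $H = U\Lambda U^T$ given just before the lemma: for a unit eigenvector $u$ of $H$ with eigenvalue $\lambda$, positivity of $A = A_0 - EE^T$ on the vector $A_0^{-1} E u$ yields $\lambda - \lambda^2 \ge 0$ with equality only if $A_0^{-1}Eu = 0$, i.e. $\lambda = 0$; combined with nonsingularity of $I - H$ (which forbids $\lambda = 1$) this gives $0 \le \lambda < 1$. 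I expect the whole proof to be short; the care lies in stating precisely why strict positivity of the quadratic form of $A$ gives the strict inequality, and in noting that $\lambda = 1$ is excluded by hypothesis even though the Rayleigh argument already rules it out among nonzero $w$.
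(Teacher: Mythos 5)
Your proof is correct, and it takes a genuinely different (and slightly more self-contained) route than the paper's. The paper proves the upper bound by invoking the Sherman--Morrison--Woodbury identity \eqref{eq:split3} to write $E^TA^{-1}E = H + H(I-H)^{-1}H = H(I-H)^{-1}$, then uses positive semidefiniteness of $E^TA^{-1}E$ to conclude $\lambda/(1-\lambda)\ge 0$, which together with $\lambda\ge 0$ and the assumed nonsingularity of $I-H$ gives $0\le\lambda<1$. You instead evaluate the quadratic form of $A=A_0-EE^T$ at $w=A_0^{-1}Eu$, obtaining $w^TAw=\lambda(1-\lambda)\|u\|^2$; in matrix terms this is the congruence $E^TA_0^{-1}AA_0^{-1}E=H-H^2$, which is exactly the identity the paper itself uses later in the second half of the proof of Theorem~\ref{thm:eigs}. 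Your computation and the handling of the degenerate case $w=0$ (which correctly collapses to $\lambda=0$) are both right. What your route buys: it avoids the SMW formula entirely, and in particular it never needs $(I-H)^{-1}$ to exist, so the hypothesis that $I-H$ is nonsingular becomes a consequence rather than an assumption (strictness at $\lambda=1$ comes from $w^TAw>0$ for $w\ne 0$). What the paper's route buys is brevity --- a two-line algebraic identity plus a sign argument --- at the cost of presupposing the invertibility of $I-H$ that the lemma is partly meant to illuminate. One cosmetic remark: your intermediate vector $v=A_0^{-1/2}Eu$ is never used once you introduce $w$; you can delete it without loss.
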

\begin{proof}
From \eqref{eq:split3},
we have
\[
E^{T}A^{-1}E=H+H(I-H)^{-1}H=H\left(I+(I-H)^{-1}H\right)=H(I-H)^{-1}.
\]
Since $A$ is SPD, $E^{T}A^{-1}E$ is at least SPSD. Thus, the eigenvalues of
$H(I-H)^{-1}$ are nonnegative,
i.e., $\lambda/(1-\lambda)\geq0$. So, we have $0\leq\lambda<1$.
\end{proof}

The goal now is to see what happens if we replace $\Lambda$ by a diagonal matrix
$\tilde \Lambda$. This will include the situation when a low-rank approximation
is used for $G$ but it can also include other possibilities. Suppose that $H$ is
approximated as follows: 
\begin{equation} \label{eq:Happrox}
H \approx U \tilde \Lambda U^T.
\end{equation}
Then, from the SMW formula, the corresponding  approximation
to $G\inv$ is: 
\eq{eq:UDU}
G\inv \approx \tilde{G}\inv \equiv (I - U \tilde \Lambda U^T) \inv =
I + U [ (I-\tilde \Lambda)\inv -I] U^T . 
\en
Note in passing that the above expression can be simplified to
$U  (I-\tilde \Lambda)\inv U^T $. However, we keep the above form  because
it will still be  valid when $U$ has
only $k$ $(k<s)$ columns and $\tilde \Lambda$ is $k \times k$ diagonal, in
which case we denote by $G_k\inv$ the approximation in \eqref{eq:UDU}.
At the same time, the exact $G$ can be obtained as a special case of
\eqref{eq:UDU},
where $\tilde \Lambda$ is simply equal to $\Lambda$. Then we have
\begin{align} \label{eq:Ainv} 
A\inv  &=  A_0\inv + (A_0\inv E) G\inv (E^T A_0\inv), 
\end{align}
and the preconditioner
\begin{align}
M\inv  &=  A_0\inv + (A_0\inv E) G_k\inv  (E^T A_0\inv), 
\label{eq:Mprec} 
\end{align}
from which it follows by subtraction that
\[ 
A\inv - M\inv = 
(A_0\inv E) ( G\inv - G_k \inv ) (E^T A_0\inv), \]
and therefore,
\eq{eq:AMinv} 
A M\inv = I - A (A_0\inv E) ( G\inv - G_k \inv) (E^T A_0\inv).
\en
A first consequence of \eqref{eq:AMinv} is that there will be at lease $m$
eigenvalues
of $AM\inv $ that are equal to one, where $m=n-s$ is the dimension of $B$
in
\eqref{eq:Global} or in other words, the  number of the interior
unknowns.
From \nref{eq:UDU} we obtain
\eq{eq:diffG}
G\inv - G_k\inv = U [ (I-\Lambda)\inv - (I-\tilde \Lambda)\inv ] U^T.
\en
The simplest selection of $\tilde \Lambda$ is the one that ensures that the $k$ 
largest eigenvalues
of $(I-\tilde \Lambda)\inv$ match the largest eigenvalues
of $(I- \Lambda)\inv$. 
This simply minimizes the 2-norm  of \nref{eq:diffG} under the assumption
that the approximation in \eqref{eq:Happrox} is of rank $k$.
Assume that the eigenvalues of $H$ are $\lambda_1 \ge \lambda_2 \ge \cdots \ge
\lambda_s$. This means that the  diagonal entries $\tilde \lambda_i $  of
$\tilde \Lambda$ are selected such that
\begin{equation} \label{eq:Lambda-1}
\tilde \lambda_i = \left\{ \begin{array}{cl} 
\lambda_i &  \mbox{if} \quad  i\le k \\
 0  & \mbox{otherwise} 
\end{array}.
\right.
\end{equation}
Observe that from \nref{eq:diffG} the eigenvalues of 
$ G\inv -G_k\inv  $ are 
\[ 
\left\{ \begin{array}{cl} 
0  & \mbox{if} \quad  i\le k \\
(1-\lambda_i)^{-1} -1  & \mbox{otherwise} 
\end{array}.
\right.
\] 
Thus, from \nref{eq:AMinv} we can infer that
$k$ more eigenvalues of $A M\inv $ will take the value one 
in addition to the existing $m$ ones revealed above independently
of the choice of $\tilde G\inv$. 
Noting that $(1- \lambda_i)\inv -1 = \lambda_i/(1-\lambda_i) \ge 0$, since $0
\le \lambda_i < 1$ and 
we can say that 
the remaining $s-k$ eigenvalues of $AM\inv$ will be between $0$ and $1$.
Therefore, the result in this case is that the preconditioned matrix
$AM\inv$  in \eqref{eq:AMinv} will have $m+k$ eigenvalues  equal to one, and
$s-k$ other eigenvalues between 0 and 1.

From an implementation point of view, it is clear that a full 
diagonalization of $H$ is not needed. All we need is $U_k$,
the $s \times k$ matrix 
consisting of the first $k$ columns of $U$, along with the diagonal matrix 
$\Lambda_k$
of the corresponding eigenvalues  $\lambda_1, \cdots, \lambda_k$. Then, noting 
that \nref{eq:UDU} is still valid with $U$ replaced by $U_k$ and
$\Lambda$ replaced by $\Lambda_k$,  we can get the approximation $G_k$
and its inverse directly:
\eq{eq:Gk2}   
G_k = I - U_k \Lambda_k U_k^T,   \quad
G_k\inv =  I + U_k [ (I-\tilde \Lambda_k)\inv -I] U_k^T.
\en
It may have become clear to the reader that it is possible to select 
$\tilde \Lambda$ so that $A M\inv$ will have eigenvalues larger than one.
Consider  defining $\tilde \Lambda$ such that
\[ 
\tilde \lambda_i = \left\{ \begin{array}{cl} 
\lambda_i & \mbox{if} \quad  i\le k \\
 \theta  & \mbox{if} \quad  i > k \\
\end{array},
\right.
\]
and denote by $G_{k,\theta}\inv$ the related analogue of \nref{eq:Gk2}.
Then,  from \nref{eq:diffG} the eigenvalues of 
$ G\inv -G_{k,\theta} \inv  $ are 
\eq{eq:eigDiffG}
\left\{ \begin{array}{cl} 
0  & \mbox{if} \quad  i\le k \\
(1-\lambda_i)^{-1} -  (1-\theta)^{-1} 
 & \mbox{if} \quad  i > k \\
\end{array}.
\right.
\en
Note that for $i>k$, we have
\[
\frac{1}{1-\lambda_i} - \frac{1}{1-\theta} = 
\frac{\lambda_i -\theta}{(1-\lambda_i)(1-\theta) }, \] 
and these eigenvalues can be made negative by selecting
$ \lambda_{k+1} \le \theta < 1$ and
the choice that yields the smallest 2-norm is $\theta = \lambda_{k+1}$.
The earlier definition of $\Lambda_k$ in \eqref{eq:Lambda-1} that
truncates the eigenvalues of
$H$ to zero corresponds to selecting $\theta = 0$.

\begin{theorem}\label{thm:eigs}
Assume that $A$ is SPD and $\theta $ is selected so that
$\lambda_{k+1} \le \theta < 1 $. Then the 
eigenvalues $\eta_i $ of $AM\inv$ are such that,
\eq{eq:eigsprec} 
1 \le \eta_i \le 1+ \frac{1}{1-\theta} \ \| A^{1/2}A_0\inv E \|_2^2.
\en
Furthermore, the term $\| A^{1/2}A_0\inv E \|_2^2$ is bounded from above by a 
constant:
\begin{equation} \notag
\norm{A^{1/2}A_0\inv E}_2^2 \le \frac{1}{4}. 
\end{equation}
\end{theorem}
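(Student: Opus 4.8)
The plan is to reduce the statement to a symmetric eigenvalue problem and then read the bounds off the spectral data already assembled. First I would symmetrize. Since $A$ is SPD, $AM\inv$ is similar to the symmetric matrix $A^{1/2}M\inv A^{1/2}$ (conjugate by $A^{1/2}$), so the eigenvalues $\eta_i$ are real and coincide with those of the latter. Here $M\inv$ is the preconditioner \nref{eq:Mprec} with $G_k\inv$ replaced by $G_{k,\theta}\inv$, so the relevant form of \nref{eq:AMinv} is $AM\inv = I - A(A_0\inv E)(G\inv - G_{k,\theta}\inv)(E^TA_0\inv)$; conjugating, and using $A^{-1/2}A = A^{1/2}$ together with the symmetry of $A_0$ and $A^{1/2}$, gives
\[
A^{1/2}M\inv A^{1/2} = I - W\,(G\inv - G_{k,\theta}\inv)\,W^T, \qquad W \equiv A^{1/2}A_0\inv E ,
\]
since $(E^TA_0\inv)A^{1/2} = W^T$. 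Everything then hinges on the spectrum of $W(G_{k,\theta}\inv - G\inv)W^T$.

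Next I would pin down $G_{k,\theta}\inv - G\inv$. From \nref{eq:diffG} it equals $U D U^T$ with $D$ diagonal, $D_{ii} = 0$ for $i \le k$ and, for $i > k$,
\[
D_{ii} = \frac{1}{1-\theta} - \frac{1}{1-\lambda_i} = \frac{\theta - \lambda_i}{(1-\lambda_i)(1-\theta)} .
\]
Since the $\lambda_i$ are ordered decreasingly and $0 \le \lambda_i \le \lambda_{k+1} \le \theta < 1$ for $i > k$ (the left inequality by Lemma~\ref{lem:eigH}, the rest by hypothesis), each $D_{ii}$ is nonnegative, hence $G_{k,\theta}\inv - G\inv$ is symmetric positive semidefinite and so is $W(G_{k,\theta}\inv - G\inv)W^T$; this already gives $\eta_i \ge 1$. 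Moreover $D_{ii} = \frac{\theta-\lambda_i}{(1-\lambda_i)(1-\theta)} \le \frac{1}{1-\theta}$ because $\theta - \lambda_i \le 1 - \lambda_i$, so $\|G_{k,\theta}\inv - G\inv\|_2 \le \frac{1}{1-\theta}$. Submultiplicativity of the spectral norm then yields $\eta_i \le 1 + \|W(G_{k,\theta}\inv - G\inv)W^T\|_2 \le 1 + \frac{1}{1-\theta}\|W\|_2^2$, which is \nref{eq:eigsprec}.

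Finally, for the uniform estimate I would compute $\|W\|_2^2 = \lambda_{\max}(W^TW) = \lambda_{\max}(E^TA_0\inv A A_0\inv E)$ and substitute $A = A_0 - EE^T$ to get $E^TA_0\inv A A_0\inv E = H - H^2$ with $H = E^TA_0\inv E$. By Lemma~\ref{lem:eigH} every eigenvalue of $H$ lies in $[0,1)$, and since $t(1-t) \le \tfrac14$ for $t \in [0,1]$, we conclude $\|A^{1/2}A_0\inv E\|_2^2 = \lambda_{\max}(H - H^2) \le \tfrac14$. The whole argument is mostly bookkeeping; the one place requiring care is the chain of inequalities for $D_{ii}$, where both assumptions on $\theta$ enter — $\theta \ge \lambda_{k+1}$ to force nonnegativity (hence $\eta_i \ge 1$), and $\theta < 1$ both to keep $\frac{1}{1-\theta}$ finite and to bound $D_{ii}$ by it — in combination with the already-established fact that the eigenvalues of $H$ never reach $1$.
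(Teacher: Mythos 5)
Your proposal is correct and follows essentially the same route as the paper: symmetrize via similarity with $A^{1/2}$, observe from \nref{eq:eigDiffG} that $G_{k,\theta}\inv - G\inv$ is positive semidefinite with eigenvalues bounded by $(1-\theta)\inv$ (using $\lambda_{k+1}\le\theta<1$ and Lemma~\ref{lem:eigH}), and obtain the constant bound by writing $E^TA_0\inv A A_0\inv E = H - H^2$ with the eigenvalues of $H$ in $[0,1)$. The only cosmetic difference is that you finish the first inequality with norm submultiplicativity where the paper invokes Rayleigh quotients, which amounts to the same estimate.
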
 
\begin{proof}
We rewrite \nref{eq:AMinv} as 
$ A M\inv = I + A (A_0\inv E) ( G_k \inv - G\inv) (E^T A_0\inv)$ or
upon applying a similarity transformation with $A^{1/2}$
\eq{eq:AMinv2} 
 A^{1/2} M\inv A^{1/2}  = I + ( A^{1/2} 
  A_0\inv E) ( G_k \inv - G\inv) (E^T A_0\inv A^{1/2}).
\en 
From \nref{eq:eigDiffG} we see that for $j\le k$ 
we have $\lambda_j ( G_k \inv - G\inv ) = 0 $, and for
$j > k$,
\[
0 \le \lambda_j ( G_k \inv - G\inv ) = 
 (1-\theta)^{-1} - (1-\lambda_j)^{-1} \le  (1-\theta)^{-1}.
\]
This is because $1/(1-t)$ is an increasing function and for $j>k$, we have
$0 \le \lambda_j \le \lambda_{k+1} \le \theta$. 
The rest of the proof follows by taking the Rayleigh quotient of
an arbitrary vector $x$ and utilizing \nref{eq:AMinv2}.

For the second part, first note that 
$\norm{A^{1/2}A_0\inv E}_2^2= \rho\left(E^T
A_0\inv A
A_0\inv E\right)$, where $\rho(\cdot)$ denotes the spectral radius of a matrix.
Then, from $A=A_0-EE^T$, we have
\begin{equation} 
 E^T A_0\inv A A_0\inv E = E^TA_0\inv E - \left(E^TA_0\inv
E\right)\left(E^TA_0\inv E\right) \equiv H - H^2. \notag
\end{equation}
Lemma \ref{lem:eigH} states that each eigenvalue $\lambda$ of $H$ satisfies
$0\le \lambda<1$.
Hence, for each eigenvalue $\mu$ of $E^T A_0\inv A
A_0\inv E$, $\mu = \lambda - \lambda^2$, 
which is between 0 and 1/4 for  $\lambda \in \ [0, 1)$.
This gives the desired bound 
$\norm{A^{1/2}A_0\inv E}_2^2 \le 1/4$.
\end{proof} 

\begin{figure}[ht]
\caption{DDLR-1: eigenvalues of $AM\inv$ with $\theta=0$ (left)
and
$\theta = \lambda_{k+1}$ (right) using $k=5$ eigenvectors for a $900 \times
900$ 2-D Laplacian with $4$ subdomains and $\alpha=1$. \label{fig:SpEgs}}
\begin{center}
\includegraphics[width=0.48\textwidth]{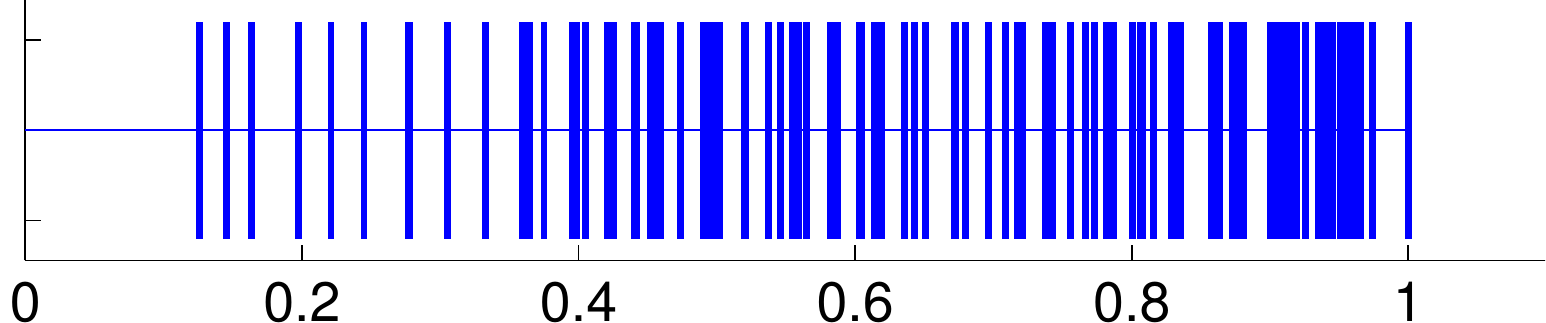} \hfill
\includegraphics[width=0.48\textwidth]{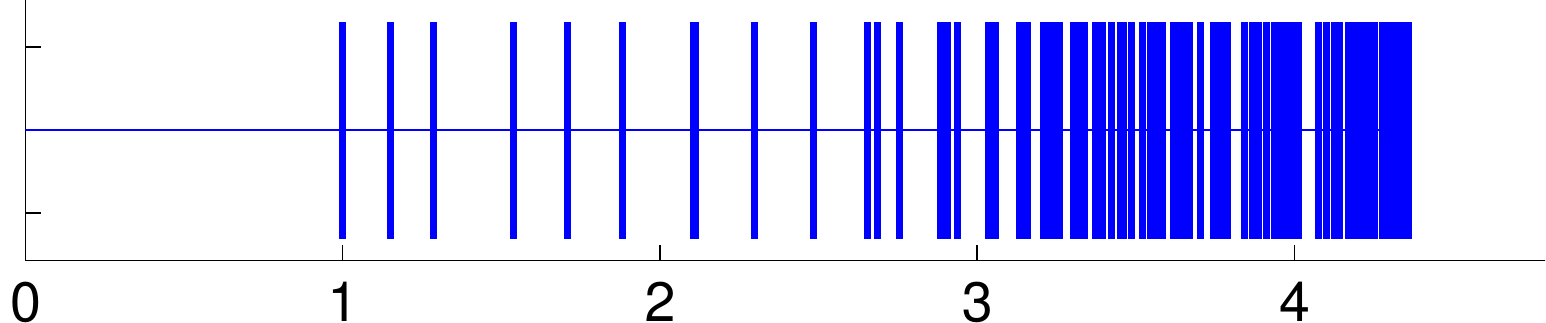}
\end{center}
\end{figure}

An illustration of the  spectra of $AM\inv$ for the two cases when
$\theta = 0$ and $\theta = \lambda_{k+1}$ with $k=5$ is shown in
Figure~\ref{fig:SpEgs}.  
The original matrix is a $900 \times 900$ 2-D Laplacian
obtained from a finite difference discretization of a square domain using $30$
mesh points in each direction. The number of the subdomains used is 4,
resulting in 119   interface unknowns.
The reordered matrix associated with this example were shown
in Figure~\ref{fig:Bmat}.

For the second choice $\theta=\lambda_{k+1}$, Theorem~\ref{thm:eigs} proved 
that 
$\|A^{1/2}A_0\inv E \|_2^2$ does not exceed $1/4$, 
regardless of the mesh size and regardless of $\alpha$, 
Numerical experiments will show that this term is close to $1/4$ for Laplacian 
matrices.
For the case with
$\alpha=1$, 
$\theta=\lambda_6\approx 0.93492$ and $\norm{A^{1/2}A_0\inv
E}_2^2\approx 0.24996$, so that the bound of the eigenvalues of $AM\inv$ given
by \nref{eq:eigsprec} is $4.8413$, which is fairly close to the largest
eigenvalue,
which is $4.3581$ (cf. the right part of Figure~\ref{fig:SpEgs}).
When $\alpha=2$,  $\theta=\lambda_6\approx 0.93987$ and
$\norm{A^{1/2}A_0\inv
E}_2^2\approx 0.25000$, so that the eigenvalue bound 
is
$5.1575$ whereas the largest eigenvalue is $4.6724$.
When $\alpha=0.5$, $\theta=\lambda_6\approx 0.96945$ and
$\norm{A^{1/2}A_0\inv
E}_2^2\approx 0.24999$, and thus the bound is $9.1840$, compared with the
largest eigenvalue $8.6917$. Therefore, we conclude for this case $\alpha=1$ 
gives the best
spectral condition number of the preconditioned matrix, which is a typical
result for SPD matrices.

We now address some implementation issues of the preconditioner related to the
second choice with $\theta = \lambda_{k+1}$. Again all that is needed are 
$U_k$, $\Lambda_k$ and $\theta$. We can show an analogue to
the expression \nref{eq:Gk2} in the following proposition.
\begin{proposition}\label{prop:gkt}
The following expression for $G_{k,\theta}\inv $ holds:
\eq{eq:gkt}
G_{k,\theta}\inv 
=
\frac{1}{1 - \theta} I  \ + \ 
U_k  \left[ (I- \Lambda_k)\inv - (1-\theta)\inv I \right] U_k^T .
\en
\end{proposition}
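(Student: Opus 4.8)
The plan is to obtain \eqref{eq:gkt} directly from the definition of $G_{k,\theta}$ together with the truncated spectral formula \eqref{eq:UDU}, specialized to the choice of $\tilde\Lambda$ described just before the proposition. Recall that $G_{k,\theta}\inv$ is the analogue of \eqref{eq:Gk2} obtained by replacing the truncated diagonal $\tilde\Lambda$ (which had $\tilde\lambda_i=\lambda_i$ for $i\le k$ and $\tilde\lambda_i=0$ otherwise) by the diagonal with $\tilde\lambda_i=\lambda_i$ for $i\le k$ and $\tilde\lambda_i=\theta$ for $i>k$. So the natural starting point is $G_{k,\theta}\inv = I + U[(I-\tilde\Lambda)\inv - I]U^T$ with this new $\tilde\Lambda$.

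First I would split the full $s\times s$ orthogonal matrix $U$ as $U=[\,U_k\;\;U_\perp\,]$, where $U_k$ collects the first $k$ columns and $U_\perp$ the remaining $s-k$; correspondingly $\tilde\Lambda=\diag(\Lambda_k,\theta I_{s-k})$. Then $(I-\tilde\Lambda)\inv - I$ is block diagonal with blocks $(I-\Lambda_k)\inv - I$ and $((1-\theta)\inv-1)I_{s-k}$, so
\eq{eq:pf-step1}
G_{k,\theta}\inv = I + U_k\left[(I-\Lambda_k)\inv - I\right]U_k^T + \left(\frac{1}{1-\theta}-1\right)U_\perp U_\perp^T.
\en
Next I would eliminate $U_\perp$ using the orthogonality relation $U_kU_k^T + U_\perp U_\perp^T = I$, i.e.\ $U_\perp U_\perp^T = I - U_kU_k^T$. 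Substituting this into \eqref{eq:pf-step1} and collecting the terms proportional to $I$ and to $U_kU_k^T$ gives
\eq{eq:pf-step2}
G_{k,\theta}\inv = \frac{1}{1-\theta}I + U_k\left[(I-\Lambda_k)\inv - I - \left(\frac{1}{1-\theta}-1\right)I\right]U_k^T = \frac{1}{1-\theta}I + U_k\left[(I-\Lambda_k)\inv - \frac{1}{1-\theta}I\right]U_k^T,
\en
which is exactly \eqref{eq:gkt}.

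There is essentially no obstacle here: the argument is a routine block-diagonal computation combined with the single identity $U_\perp U_\perp^T = I-U_kU_k^T$. The only point requiring a word of care is the appeal to that orthogonality relation, which presumes $U$ is a genuine $s\times s$ orthogonal matrix (so that $U_k$ has orthonormal columns and $U_kU_k^T$ is the orthogonal projector onto the leading eigenspace); this is legitimate because $H=E^TA_0\inv E$ is symmetric and hence admits the full spectral factorization $H=U\Lambda U^T$ used earlier in the section. One should also note that the computation is valid even though the $s-k$ trailing eigenvalues $\lambda_i$ ($i>k$) are never needed: they have been replaced wholesale by $\theta$, and the trailing block of $U$ enters only through the projector $I-U_kU_k^T$, so only $U_k$, $\Lambda_k$ and the scalar $\theta$ appear in the final formula, consistent with the implementation remark preceding the proposition.
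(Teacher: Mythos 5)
Your proof is correct and follows essentially the same route as the paper's: both split $U=[U_k,\,U_\perp]$ (the paper calls the trailing block $W$), apply the block-diagonal form of $(I-\tilde\Lambda)^{-1}-I$, and eliminate the unavailable trailing columns via $U_\perp U_\perp^T=I-U_kU_k^T$ before collecting terms. No gaps; your added remark that only $U_k$, $\Lambda_k$ and $\theta$ survive is consistent with the paper's implementation discussion.
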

\begin{proof}
We write $U = [U_k, W]$, where $U_k$ is as before and $W$ contains the
remaining columns $u_{k+1}, \cdots, u_s$. Note that $W$ is not available
but we use the fact that $W W^T = I-U_k U_k^T$ for the purpose of this
proof.
With this, \nref{eq:UDU} becomes:
\begin{align*}
G_{k,\theta}\inv &= 
I + [U_k, W] \xpmatrix{  (I- \Lambda_k)\inv -I  &   \\ 
 & ((1-\theta)\inv -1) I } [U_k, W]^T   \\
&=
I + U_k \left[ (I- \Lambda_k)\inv -I \right] U_k^T 
+ \left[(1-\theta)\inv -1 \right] (I- U_k U_k^T)  \\
&= 
\frac{1}{1 - \theta} I  \ + \ 
U_k  \left[ (I- \Lambda_k)\inv - (1-\theta)\inv I \right] U_k^T .
\end{align*}
\end{proof}
\begin{proposition}\label{prop:spd-1}
Let the assumptions of Lemma \ref{lem:eigH} be satisfied.
The preconditioner  \eqref{eq:Mprec} with the matrix
$G_{k,\theta}\inv$ defined by \eqref{eq:gkt} is well-defined and
SPD when $ \theta < 1$.
\end{proposition}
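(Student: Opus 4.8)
The plan is to show two things: that $M\inv$ in \eqref{eq:Mprec} with $G_{k,\theta}\inv$ given by \eqref{eq:gkt} is well-defined (i.e., no inverse that appears fails to exist), and that the resulting $M\inv$ is symmetric and positive definite. Well-definedness is essentially bookkeeping: $A_0$ is SPD by the argument already given (from \eqref{eq:split}, $A_0 = A + EE^T$ with $A$ SPD), so $A_0\inv$ exists; the factors $(I-\Lambda_k)\inv$ and $(1-\theta)\inv I$ in \eqref{eq:gkt} exist precisely because Lemma~\ref{lem:eigH} gives $0 \le \lambda_i < 1$ for every eigenvalue of $H$ — in particular $\lambda_i < 1$ for $i \le k$ — and because $\theta < 1$ by hypothesis. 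So the expression \eqref{eq:gkt} makes sense.

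For the SPD claim, symmetry of $M\inv$ is immediate from the form \eqref{eq:Mprec}: $A_0\inv$ is symmetric, and $G_{k,\theta}\inv$ as written in \eqref{eq:gkt} is manifestly symmetric (it is a sum of a scalar multiple of $I$ and $U_k D U_k^T$ with $D$ diagonal). Hence $M\inv = A_0\inv + (A_0\inv E) G_{k,\theta}\inv (A_0\inv E)^T$ is symmetric. The substantive point is positive definiteness, and the clean way to get it is to observe that $M\inv$ has the same SMW structure as $A\inv$ in \eqref{eq:Ainv}, but with $G\inv$ replaced by $G_{k,\theta}\inv$; so it suffices to show $G_{k,\theta}\inv$ is itself SPD, and then argue that plugging an SPD "middle matrix" into this Woodbury-type expression yields an SPD matrix. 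For the first half: from \eqref{eq:gkt}, for any $x$ decompose using $U_k U_k^T$ and $I - U_k U_k^T$; the quadratic form becomes a nonnegative combination, with coefficients $(1-\lambda_i)\inv > 0$ on the range of $U_k$ and $(1-\theta)\inv > 0$ on its complement, so $G_{k,\theta}\inv \succ 0$. Equivalently, $G_{k,\theta}\inv = \tilde G\inv$ where $\tilde G = I - U\tilde\Lambda U^T$ with $\tilde\lambda_i \in \{\lambda_i, \theta\} \subset [0,1)$, so all eigenvalues of $\tilde G$ lie in $(0,1]$, whence $\tilde G$ and its inverse are SPD.

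To finish, I would show $M\inv \succ 0$ directly rather than inverting. Writing $P \equiv A_0\inv E$, we have $M\inv = A_0\inv + P\, G_{k,\theta}\inv P^T$; the first term is SPD and the second is SPSD (as $G_{k,\theta}\inv \succ 0$), so their sum is SPD. That is the whole argument — no eigenvalue bound or spectral-radius estimate is needed here, only positivity of $A_0$, positivity of the middle matrix, and the fact that a congruence $P(\cdot)P^T$ of an SPSD matrix is SPSD. The only mild subtlety, and the one place I would be careful, is to state explicitly why $A_0 \succ 0$ and why each $\lambda_i < 1$ (invoking Lemma~\ref{lem:eigH} and the standing assumption $\theta < 1$), since these are exactly the hypotheses that rule out a zero or negative coefficient in \eqref{eq:gkt}; everything after that is a one-line sum-of-PSD-matrices observation.
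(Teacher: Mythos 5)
Your proposal is correct and follows essentially the same route as the paper: identify the eigenvalues of $G_{k,\theta}\inv$ as $(1-\lambda_i)\inv$ (on the range of $U_k$) and $(1-\theta)\inv$ (on its orthogonal complement), use Lemma~\ref{lem:eigH} and $\theta<1$ to conclude these are all positive so that $G_{k,\theta}\inv$ is well-defined and SPD, and then deduce that $M\inv$ is SPD. Your write-up is more explicit than the paper's (which compresses the final step to ``Hence, preconditioner \eqref{eq:Mprec} is SPD''), spelling out that $M\inv = A_0\inv + P\,G_{k,\theta}\inv P^T$ is the sum of an SPD term and an SPSD congruence, but the underlying argument is the same.
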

\begin{proof}
From \eqref{eq:gkt}, the eigenvalues of $G_{k,\theta}\inv$ are 
$(1-\lambda_i)\inv$, $i=1,\ldots,k$ or $(1-\theta)\inv$.
Recall from Lemma \ref{lem:eigH}, $0 \le \lambda_i < 1$ for all $i$ and 
thus $G_{k,\theta}\inv$ is well-defined and SPD when $
\theta < 1$.
Hence, preconditioner \eqref{eq:Mprec} is SPD.
\end{proof}

We refer to the preconditioner \eqref{eq:Mprec} with 
$G_k\inv=G_{k,\theta}\inv$ as the \emph{one-sided} DDLR preconditioner, 
abbreviated by  DDLR-1.

\subsection{Two-sided low-rank approximation} \label{sec:two-sided}
The method to be presented in this section uses low-rank
approximations for more terms in \eqref{eq:split3}, which
yields a  preconditioner that has a simpler form. Compared with the DDLR-1
method, the resulting preconditioner is less expensive to apply and less
accurate in general.
Suppose that $A_0\inv E \in \RR^{n\times s}$ is factored in the form
\eq{eq:svd}
A_0\inv E = UV^T,
\en
as obtained from the singular value decomposition (SVD),
where $U\in \RR^{n \times s}$ and $V \in \RR^{s \times s}$ is orthogonal. Then,
for the matrix $G$ in
\eqref{eq:Gmat}, we have the following lemma.
\begin{lemma} \label{lemma:G}
 Let $G=I-E^TA_0\inv E$ as defined by \eqref{eq:Gmat} be
 nonsingular. Then,
 \begin{equation} \notag
  G\inv = I+V\left(I-U^TEV\right)\inv U^TE.
 \end{equation}
 Furthermore, the following relation holds,
 \begin{equation} \label{eq:VTGV}
  V^TG\inv V = \left(I-U^TEV\right)\inv.
 \end{equation}
\end{lemma}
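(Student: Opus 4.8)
The statement to prove is Lemma~\ref{lemma:G}: given the SVD‑type factorization $A_0\inv E = UV^T$ with $V$ orthogonal, and assuming $G = I - E^T A_0\inv E$ is nonsingular, establish
\[
G\inv = I + V\left(I - U^T E V\right)\inv U^T E,
\qquad
V^T G\inv V = \left(I - U^T E V\right)\inv .
\]
The plan is to substitute the factorization into $G$, recognize the resulting expression as a rank‑structured perturbation of the identity, and apply the Sherman–Morrison–Woodbury (SMW) formula — exactly the tool already used in \nref{eq:split3}. Since $A_0\inv E = UV^T$, we have $E^T A_0\inv E = (A_0\inv E)^T (A_0 A_0\inv) E$... more directly, note $E^T A_0\inv E = V U^T E$ is not quite it either; the clean route is $E^T A_0\inv E = E^T (UV^T) $ — wait, $A_0\inv E = UV^T$ gives $E^T A_0\inv = (A_0\invt E)^T$; using symmetry of $A_0$, $A_0\inv$ is symmetric, so $E^T A_0\inv = (A_0\inv E)^T = V U^T$. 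Hence $E^T A_0\inv E = V U^T E = V (U^T E)$, and
\[
G = I - V (U^T E).
\]

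So $G$ is the identity minus a product of an $s\times s$ matrix $V$ and an $s \times s$ matrix $U^T E$. First I would write $G = I - V(U^T E) V^{-1} V = \bigl(I - V (U^T E V) V^{-1}\bigr)$ — i.e., factor out $V$ on the left and $V^{-1}=V^T$ on the right: $G = V\bigl(I - U^T E V\bigr) V^T$. This is the key observation; it immediately gives $G\inv = V \bigl(I - U^T E V\bigr)\inv V^T$, which instantly yields the second identity \eqref{eq:VTGV} after multiplying on the left by $V^T$ and on the right by $V$ and using $V^T V = V V^T = I$. For the first identity, I would then expand $V(I - U^TEV)\inv V^T$ and show it equals $I + V(I-U^TEV)\inv U^T E$; this reduces to showing $V(I-U^TEV)\inv V^T - V(I-U^TEV)\inv U^TE = I$, i.e. $V(I-U^TEV)\inv(V^T - U^TE) = I$. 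Here I would either (a) use $V^T - U^T E = V^T - U^T E V V^T = (I - U^T E V) V^T$ so the left side telescopes to $V V^T = I$; or equivalently (b) invoke SMW directly on $G = I - V(U^TE)$ with the two factors $V$ and $U^T E$, giving $G\inv = I + V (I - (U^TE) V)\inv (U^T E)$, which is precisely the claimed formula. Both routes are short.

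**Main obstacle.** Honestly there is no deep obstacle; the only thing to be careful about is bookkeeping of which matrices are invertible. One should check that $I - U^T E V$ is nonsingular so that the formulas make sense — but this is immediate from $G = V(I-U^TEV)V^T$ together with the hypothesis that $G$ is nonsingular and $V$ is orthogonal (hence invertible), since a product of square matrices is nonsingular iff each factor is. A secondary subtlety is making sure the shapes are right: $U \in \RR^{n\times s}$, $E \in \RR^{n\times s}$, so $U^T E \in \RR^{s\times s}$ and $V \in \RR^{s\times s}$, so all the intermediate matrices are genuinely $s\times s$ and the algebra is legitimate. With those two remarks dispatched, the proof is a two‑line factorization followed by a one‑line application of SMW (or the telescoping identity above), so the write‑up should be compact.
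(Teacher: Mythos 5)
Your proposal is correct and essentially matches the paper's proof: the paper likewise uses symmetry of $A_0$ to write $E^TA_0\inv E=VU^TE$ and then applies the push-through/SMW identity $(I-VU^TE)\inv = I+V(I-U^TEV)\inv U^TE$, followed by the same $V^T(\cdot)V$ computation for \eqref{eq:VTGV} — this is exactly your route (b). Your route (a), the factorization $G=V(I-U^TEV)V^T$, is a tidy equivalent that additionally makes the nonsingularity of $I-U^TEV$ explicit, a point the paper leaves implicit.
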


\begin{proof}
For $G\inv$, we can write
\begin{align}
 G\inv = \left(I-(E^TA_0\inv) E\right)\inv = \left(I-VU^TE\right)\inv \notag 
 = I+V\left(I-U^TEV\right)\inv U^TE.
\end{align}
Relation \eqref{eq:VTGV} follows from
\begin{align}
 V^TG\inv V &= V^T(I+V\left(I-U^TEV\right)\inv U^TE)V 
=\left(I-U^TEV\right)\inv. \notag
\end{align}
\end{proof}

From \eqref{eq:svd}, the best $2$-norm
rank-$k$ approximation to $A_0\inv E$ is of the form
\eq{eq:LR}
A_0 \inv E \approx U_k V_k^T,
\en
where $U_k \in \RR^{n \times k}$ and $V_k \in \RR^{s \times k}$ with
$V_k^TV_k=I$
consist of the first $k$ columns of $U$ and $V$ respectively.
For an approximation to $G$, we define the matrix $G_k$
as
\eq{eq:Gk}
G_k = I - V_k U_k^T E \ .
\en
Then, the expression of $A\inv$ in \eqref{eq:split3} will yield the
preconditioner: 
\eq{eq:Form2} \notag
M \inv  = A_0\inv + U_k ( V_k^T  G_k\inv V_k) U_k^T.
\en
This means that we can build an approximate inverse based on a
low-rank correction of the form that avoids the use of  $V_k$ explicitly,
\eq{eq:Form2a}
M\inv = A_0\inv + U_k H_k U_k^T \quad\mbox{with} \quad
H_k = V_k^T G_k\inv V_k.
\en
Note that Lemma \ref{lemma:G} will also hold if $U$ and $V$ are replaced with
$U_k$ and $V_k$.
As a result, the matrix $H_k$ has an alternative expression that is
more amenable to computation.
Specifically, we can  show the following lemma.
\begin{lemma} \label{lem:1}
Let $G_k$ be defined by \nref{eq:Gk} and assume that  matrix
$I - U_k^T E V_k$ is nonsingular.
Then,
\eq{eq:Gkinv} \notag
G_k\inv =
I + V_k \hat H_k  U_k^T E  \quad
\mbox{with} \quad \hat H_k = (I - U_k^T E V_k)\inv .
\en
Furthermore, the following relation holds:
\eq{eq:Hk1} \notag
V_k^T G_k\inv V_k = \hat H_k
\en
i.e., the matrix $H_k$ in \nref{eq:Form2a} and the matrix
$\hat H_k$ are equal.
\end{lemma}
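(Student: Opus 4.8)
The plan is to recognize Lemma \ref{lem:1} as an instance of the Sherman--Morrison--Woodbury (push-through) identity applied to a rank-$k$ perturbation of the identity. Indeed, $G_k = I - V_k\,(U_k^TE)$ from \nref{eq:Gk} is $I - PQ$ with the ``tall'' factor $P = V_k \in \RR^{s\times k}$ and the ``wide'' factor $Q = U_k^TE \in \RR^{k\times s}$, so the target formula should fall out of
\[
 (I - PQ)\inv \ = \ I + P\,(I - QP)\inv\,Q .
\]
First I would note that this is a purely algebraic identity (it is checked by multiplying both sides on the left by $I-PQ$), so the only hypothesis it needs is the invertibility of $I - QP = I - U_k^TEV_k$, which is exactly what the lemma assumes; moreover, since $PQ$ and $QP$ share their nonzero eigenvalues, this hypothesis is equivalent to nonsingularity of $G_k$ itself. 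Substituting $P = V_k$, $Q = U_k^TE$ then gives immediately
\[
 G_k\inv \ = \ I + V_k\,(I - U_k^TEV_k)\inv\,U_k^TE \ = \ I + V_k\,\hat H_k\,U_k^TE ,
\]
which is the first assertion.

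For the second assertion I would left-multiply the formula just obtained by $V_k^T$ and right-multiply by $V_k$, using the orthonormality $V_k^TV_k = I$ (the columns of $V_k$ are the leading right singular vectors from \nref{eq:svd}, cf. \nref{eq:LR}). This yields
\[
 V_k^T G_k\inv V_k \ = \ V_k^TV_k + (V_k^TV_k)\,\hat H_k\,(U_k^TEV_k) \ = \ I + \hat H_k\,(U_k^TEV_k) .
\]
To collapse the right-hand side, I would use that $\hat H_k = (I - U_k^TEV_k)\inv$ forces $U_k^TEV_k = I - \hat H_k\inv$, hence $\hat H_k\,(U_k^TEV_k) = \hat H_k - I$, and therefore $V_k^T G_k\inv V_k = I + (\hat H_k - I) = \hat H_k$, i.e.\ $H_k = \hat H_k$ as defined in \nref{eq:Form2a}.

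An equivalent and shorter route, which I might actually prefer to present, is to observe that the statement and its proof are \emph{verbatim} those of Lemma \ref{lemma:G} with $U,V$ replaced by $U_k,V_k$: the argument for Lemma \ref{lemma:G} used only the factorization of $E^TA_0\inv$ (here $V_kU_k^T$, by symmetry of $A_0$) together with the orthonormality of the columns of $V$, and nothing in it required $U$ or $V$ to be square. Either way, there are no estimates or inequalities involved; the only genuinely delicate point — and the one I would be most careful about — is the size bookkeeping in the rectangular push-through step (noting that $U_k^TEV_k$ is $k\times k$, not $s\times s$) and the remark that its nonsingularity coincides with that of $G_k$. This is the ``main obstacle,'' but it is a matter of care rather than of difficulty.
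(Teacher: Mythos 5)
Your proposal is correct and follows essentially the same route as the paper: the paper proves Lemma~\ref{lemma:G} by exactly the push-through identity $(I-VU^TE)\inv = I + V(I-U^TEV)\inv U^TE$ followed by the $V^T(\cdot)V$ sandwich with $V^TV=I$, and then obtains Lemma~\ref{lem:1} by the verbatim substitution $U,V,G \to U_k,V_k,G_k$ that you identify as the ``shorter route.'' Nothing further is needed.
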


\begin{proof}
A proof can be directly obtained from the proof of Lemma \ref{lemma:G} by
replacing matrices $U$,$V$ and $G$ with $U_k$,$V_k$ and $G_k$ respectively.
\end{proof}

The application of \eqref{eq:Form2a} requires
one solve with $A_0$ and a low-rank correction with
$U_k$ and $H_k$. 
Since $A_0\inv E$ is
approximated on both sides of $G$ in \eqref{eq:split3},
 we refer to this preconditioner as a
\emph{two-sided} DDLR preconditioner and use the abbreviation DDLR-2.

\begin{proposition}\label{prop:spd-2}
Assume that $U_kV_k^T$ in \eqref{eq:LR} is the best $2$-norm rank-$k$
approximation to $A_0\inv E$, and that $A_0$
is SPD. Then the preconditioner given by \eqref{eq:Form2a} is well-defined 
and
SPD if and only if $\rho(U_k^T E V_k) <1$.
\end{proposition}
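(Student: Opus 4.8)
The plan is to turn the low-rank correction in \eqref{eq:Form2a} into an honest congruence and then read off the inertia from a singular value decomposition. Set $B_k := U_k^T E V_k$. By Lemma~\ref{lem:1}, whenever $I-B_k$ is nonsingular the preconditioner \eqref{eq:Form2a} may be written as
\[
M\inv = A_0\inv + U_k (I-B_k)\inv U_k^T ,
\]
while if $I-B_k$ is singular then $G_k$ in \eqref{eq:Gk} is singular as well, because $\det(I - V_k(U_k^T E)) = \det(I - (U_k^T E)V_k) = \det(I-B_k)$, and so \eqref{eq:Form2a} is not well-defined. Thus ``well-defined'' is precisely the statement that $1$ is not an eigenvalue of $B_k$, and under that assumption it remains to decide when $M\inv$ is SPD.

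The key observation I would record is that $B_k = U_k^T A_0 U_k$. Indeed, from \eqref{eq:svd} and the orthogonality of $V$ we get $U = A_0\inv E V$, hence $U_k = A_0\inv E V_k$, i.e. $A_0 U_k = E V_k$; multiplying on the left by $U_k^T$ gives $B_k = U_k^T E V_k = U_k^T A_0 U_k$. In particular $B_k$ is symmetric positive semidefinite, so its eigenvalues may be written $\sigma_1^2 \ge \cdots \ge \sigma_k^2 \ge 0$ with $\rho(B_k) = \sigma_1^2$. Next I would apply the congruence by $A_0^{1/2}$ (legitimate since $A_0$ is SPD), obtaining
\[
A_0^{1/2} M\inv A_0^{1/2} = I + W_k (I - W_k^T W_k)\inv W_k^T , \qquad W_k := A_0^{1/2} U_k ,
\]
where $W_k^T W_k = U_k^T A_0 U_k = B_k$. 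Writing a thin SVD $W_k = P\,\Sigma\,Q^T$ with $P^T P = I$, $Q$ orthogonal and $\Sigma = \diag(\sigma_1,\dots,\sigma_k)$, a one-line computation gives $A_0^{1/2} M\inv A_0^{1/2} = I + P\,\diag\!\big(\sigma_i^2/(1-\sigma_i^2)\big)\,P^T$, so this matrix has the eigenvalue $1$ with multiplicity $n-k$ together with the eigenvalues $1/(1-\sigma_i^2)$ for $i=1,\dots,k$.

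From here both implications are immediate. If $\rho(B_k) < 1$ then every $\sigma_i^2 < 1$, so $I - B_k$ is positive definite (hence $M\inv$ is well-defined) and all $n$ eigenvalues above are positive; therefore $A_0^{1/2} M\inv A_0^{1/2}$ is SPD and, being congruent to it, so is $M\inv$. Conversely, suppose $\rho(B_k) \ge 1$: if $\sigma_1^2 = 1$ then $I - B_k$ is singular and $M\inv$ is not well-defined, whereas if $\sigma_1^2 > 1$ then $1/(1-\sigma_1^2) < 0$, so $A_0^{1/2} M\inv A_0^{1/2}$ — and hence $M\inv$, by Sylvester's law of inertia — has a negative eigenvalue and is not SPD. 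This yields $M\inv$ well-defined and SPD $\iff \rho(U_k^T E V_k) < 1$.

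I expect the only substantive step to be the identity $B_k = U_k^T A_0 U_k$, which is what converts $U_k(I-B_k)\inv U_k^T$ into a term handled cleanly by the $A_0^{1/2}$ congruence; after that the SVD bookkeeping and the inertia argument are routine. The one place that needs care is the borderline case $\rho(B_k) = 1$, where the obstruction is failure of well-definedness rather than of positivity, and so must be separated from the case $\rho(B_k) > 1$.
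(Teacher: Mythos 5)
Your argument is correct, and it is genuinely more self-contained than what the paper does: the paper's own ``proof'' of Proposition~\ref{prop:spd-2} is a pointer to Propositions~3.2, 3.4 and Theorem~3.6 of the MLR paper \cite{MLR-1} rather than an in-house derivation. The crux of your version is the identity $U_k^T E V_k = U_k^T A_0 U_k$, which follows from $A_0 U_k = E V_k$ (a consequence of the SVD convention $A_0\inv E = UV^T$ with $V$ orthogonal); this single identity simultaneously delivers the symmetry of $H_k = (I - U_k^T E V_k)\inv$ (the content of Proposition~3.2 in \cite{MLR-1}) and sets up the congruence $A_0^{1/2} M\inv A_0^{1/2} = I + W_k(I - W_k^T W_k)\inv W_k^T$ with $W_k = A_0^{1/2}U_k$, after which the thin SVD of $W_k$ reduces everything to the scalars $1/(1-\sigma_i^2)$ and Sylvester's law of inertia finishes both directions. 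The determinant identity $\det(I - V_k(U_k^T E)) = \det(I - (U_k^T E)V_k)$ correctly ties well-definedness of \eqref{eq:Form2a} (nonsingularity of $G_k$) to $1 \notin \sigma(U_k^T E V_k)$, and your separation of the borderline case $\rho = 1$ (failure of well-definedness) from $\rho > 1$ (failure of positivity) is exactly the care the iff statement requires. What your approach buys is a complete proof readable without consulting \cite{MLR-1}; what the paper's citation buys is brevity and reuse of results already established in the recursive MLR setting. As a small bonus, your computation also exhibits the full spectrum of $A_0^{1/2}M\inv A_0^{1/2}$ ($n-k$ eigenvalues equal to one plus the $k$ values $1/(1-\sigma_i^2)$), which is more than the proposition asks for.
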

\begin{proof}
The proof follows from Proposition 3.2 in \cite{MLR-1} showing
the symmetry of $H_k$, and Proposition 3.4 and Theorem 
3.6 in \cite{MLR-1} for the
if-and-only-if condition.
\end{proof}

Next, we will show that the eigenvalues of the preconditioned matrix $AM\inv$ 
are between zero and one.
Suppose that $U_kV_k^T$ is obtained as in \eqref{eq:LR},
so that we have
$\left(A_0\inv E\right)V_k = U_k$.
Then, the preconditioner \eqref{eq:Form2a} can be rewritten as
\begin{align} \label{eq:Mtwosided}
M^{-1} &= A_{0}^{-1}+U_{k}H_{k}U_{k}^{T} 
=A_{0}^{-1}+\left(A_{0}^{-1}E\right)V_{k}H_{k}V_{k}^{T}\left(E^{T}A_{0}^{-1}
\right) \notag \\
&=A_{0}^{-1}+\left(A_{0}^{-1}E\right)V
\begin{pmatrix}
H_{k} & 0 \\
0 & 0
\end{pmatrix}
V^{T}\left(E^{T}A_ { 0 } ^ { -1 }
\right),
\end{align}
where $U$ and $V$ are defined in \eqref{eq:svd}.
We write $U=\left[U_k,\bar{U}\right]$ and $V=\left[V_k,\bar{V}\right]$,
where $\bar{U}$ and $\bar{V}$ consist of the $s-k$ columns
of $U$ and $V$ that are not contained in $U_k$ and $V_k$.
Recall that $H_k\inv=I-U_k^TEV_k$ and define $X=I-\bar{U}^TE\bar{V}$,
$Z=-U_k^TE\bar{V}$. 
From \eqref{eq:Ainv} and \eqref{eq:svd}, we have
\begin{equation} \label{eq:Ainv1a}
 A\inv = A_0\inv + \left(A_{0}^{-1}E\right)V\left(V^TG\inv V
\right)V^{T}\left(E^{T}A_ 0\inv\right), \notag 
\end{equation}
from which and \eqref{eq:VTGV}, it follows that
\begin{align} \label{eq:Ainv1}
A\inv &=A_0\inv + \left(A_{0}^{-1}E\right)V\left(I-U^TEV
\right)\inv V^{T}\left(E^{T}A_ { 0 } ^ { -1 }\right), \notag \\
&=A_{0}^{-1}+\left(A_{0}^{-1}E\right)V
\begin{pmatrix}
H_{k}\inv & Z \\
Z^T & X
\end{pmatrix}\inv
V^{T}\left(E^{T}A_0\inv\right).
\end{align}
Let the Schur complement of $H_k\inv$ be
\begin{equation} \label{eq:SS}
S_k=X-Z^TH_kZ \in \RR^{(s-k) \times (s-k)},
\end{equation}
and define matrix $\bar{S}_k \in \RR^{2(s-k) \times 2(s-k)}$ by
\begin{equation} \label{eq:Sbar}
\bar{S}_k= \begin{pmatrix}
  S_k\inv & -I \\
  -I & S_k
 \end{pmatrix}.
\end{equation}
Then, the following lemma shows that $S_k$ is SPD and $\bar S_k$ is SPSD.
\begin{lemma} \label{lem:S}
Assume that $G$ defined by \eqref{eq:split3} is
 nonsingular as well as the matrix
$I - U_k^T E V_k$. Then, the Schur complement $S_k$ defined by \eqref{eq:SS}
is SPD.
Moreover, matrix
$\bar S_k$
is SPSD with $s-k$ positive eigenvalues and $s-k$ zero eigenvalues.
\end{lemma}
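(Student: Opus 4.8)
The plan is to recognize $S_k$ as the Schur complement of the positive definite matrix $G$, taken in an orthonormal basis, and then read off its inertia from Sylvester's law; the claim about $\bar S_k$ then follows from a one-line symmetric factorization. First I would check that $G$ is SPD. Recall the standing assumption that $A$, and hence $A_0=A+EE^T$, is SPD; since $G=I-H$ with $H=E^TA_0\inv E$ is assumed nonsingular, Lemma~\ref{lem:eigH} gives that every eigenvalue of $H$ lies in $[0,1)$, so every eigenvalue of $G=I-H$ lies in $(0,1]$ and $G$ is SPD.

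Next I would put $V^TGV$ in block form. Using the SVD $A_0\inv E=UV^T$ with $V$ orthogonal, together with the partitions $U=[U_k,\bar U]$ and $V=[V_k,\bar V]$,
\[
V^TGV=I-U^TEV=\begin{pmatrix} I-U_k^TEV_k & -U_k^TE\bar V\\ -\bar U^TEV_k & I-\bar U^TE\bar V\end{pmatrix}=\begin{pmatrix} H_k\inv & Z\\ Z^T & X\end{pmatrix},
\]
where the symmetry of $G$ forces the two off-diagonal blocks to be mutual transposes, so that $-\bar U^TEV_k=Z^T$. Because $V$ is orthogonal, $V^TGV$ is orthogonally similar to $G$, hence SPD, with $s$ positive eigenvalues. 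The crucial intermediate point is that the leading block $H_k\inv=I-U_k^TEV_k$ is itself SPD: from $A_0\inv E=UV^T$ one obtains $A_0\inv EV_k=U_k$, whence $U_k^TEV_k=V_k^TE^TA_0\inv EV_k=V_k^THV_k$ is a compression of the SPSD matrix $H$ by the orthonormal columns of $V_k$, so its eigenvalues lie in $[0,\lambda_{\max}(H)]\subseteq[0,1)$. Hence $H_k\inv$ has eigenvalues in $(0,1]$ and is SPD of order $k$; in particular the hypothesis that $I-U_k^TEV_k$ is nonsingular is automatic.

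Now $S_k=X-Z^TH_kZ=X-Z^T(H_k\inv)\inv Z$ is exactly the Schur complement of the leading block $H_k\inv$ in $V^TGV$, so $V^TGV$ is congruent to the block-diagonal matrix with diagonal blocks $H_k\inv$ and $S_k$. By Sylvester's law of inertia, and since $V^TGV$ has $s$ positive eigenvalues while $H_k\inv$ contributes $k$ of them (being SPD of order $k$), the $(s-k)\times(s-k)$ block $S_k$ must have $s-k$ positive eigenvalues; that is, $S_k$ is SPD. For $\bar S_k$, since $S_k$ is SPD its square roots exist and
\[
\bar S_k=\begin{pmatrix} S_k\inv & -I\\ -I & S_k\end{pmatrix}=\begin{pmatrix} S_k^{-1/2}\\ -S_k^{1/2}\end{pmatrix}\begin{pmatrix} S_k^{-1/2} & -S_k^{1/2}\end{pmatrix},
\]
which exhibits $\bar S_k$ as the Gram matrix of a block of full column rank $s-k$; hence $\bar S_k$ is SPSD with exactly $s-k$ positive eigenvalues and $s-k$ zero eigenvalues. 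Equivalently, one may repeat the Schur-complement argument with leading block $S_k\inv$, whose Schur complement in $\bar S_k$ is $S_k-S_k=0$.

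I expect the only genuinely delicate step to be the identification of the leading block of $V^TGV$ with the compression $I-V_k^THV_k$ (via $A_0\inv EV_k=U_k$), since this is what makes the inertia count yield exactly $s-k$ positive eigenvalues for $S_k$; everything else is routine manipulation with orthogonal bases, congruence, and matrix square roots.
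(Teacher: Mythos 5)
Your proposal is correct and takes essentially the same route as the paper: the paper's (much terser) proof likewise deduces from Lemma~\ref{lem:eigH} that $G$ is SPD, hence so is $V^TG\inv V=(I-U^TEV)\inv$ and therefore its Schur complement $S_k$, and reads the inertia of $\bar S_k$ off a block LDL factorization, which is equivalent to your Gram-matrix factorization $\bar S_k=\bigl(\begin{smallmatrix}S_k^{-1/2}\\-S_k^{1/2}\end{smallmatrix}\bigr)\bigl(\begin{smallmatrix}S_k^{-1/2}&-S_k^{1/2}\end{smallmatrix}\bigr)$. Your additional verifications (the identification $V^TGV=I-U^TEV$, the symmetry of the off-diagonal blocks, and the compression argument showing $H_k\inv$ is SPD) are details the paper leaves implicit but are consistent with its argument.
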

\begin{proof}
From Lemma \ref{lem:eigH}, we can infer that the eigenvalues of $G$ are all
positive. Thus, $G$ is SPD and so is matrix $V^TG\inv V$. In the end, the
Schur complement $S_k$ is SPD when $H_k$ is nonsingular. The signs of the
eigenvalues of $\bar S_k$ can be easy revealed by a block LDL factorization.
\end{proof}

\begin{theorem}\label{thm:eigs2}
Assume that $A$ is SPD. Then the eigenvalues $\eta_i $ of $AM\inv$ with $M\inv$ 
given by \eqref{eq:Form2a} satisfy $0 < \eta_i \le 1$.
\end{theorem}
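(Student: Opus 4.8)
The plan is to mimic the proof of Theorem~\ref{thm:eigs} by using the similarity transformation $AM\inv \sim A^{1/2}M\inv A^{1/2}$ and reducing everything to the spectrum of a Schur-complement-type operator. First I would subtract \eqref{eq:Mtwosided} from \eqref{eq:Ainv1}, obtaining
\[
A\inv - M\inv = \left(A_0\inv E\right) V \left[ \begin{pmatrix} H_k\inv & Z \\ Z^T & X \end{pmatrix}\inv - \begin{pmatrix} H_k & 0 \\ 0 & 0 \end{pmatrix} \right] V^T \left(E^T A_0\inv\right).
\]
The bracketed $s \times s$ matrix is the key object. Using the block inverse formula with the Schur complement $S_k = X - Z^T H_k Z$ from \eqref{eq:SS} (which is SPD by Lemma~\ref{lem:S}), the $(1,1)$ block of the first term is $H_k + H_k Z S_k\inv Z^T H_k$, so subtracting $H_k$ cancels the bare $H_k$; the remaining matrix can be written as $\begin{pmatrix} H_k Z \\ -I \end{pmatrix} S_k\inv \begin{pmatrix} Z^T H_k & -I \end{pmatrix}$, which is symmetric positive semidefinite of rank $s-k$. (This is essentially the statement that $\bar S_k$ in \eqref{eq:Sbar} is SPSD, after an obvious congruence.) Hence $A\inv - M\inv$ is SPSD, which immediately gives $A\inv \succeq M\inv$, equivalently $M\inv A^{1/2} \preceq A\inv$ conjugated appropriately, and therefore $\eta_i \le 1$ for every eigenvalue of $AM\inv$ (note $m+0$ of them already equal one, coming from the interior block, but only the upper bound is needed here).

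For the lower bound $\eta_i > 0$, I would argue that $M\inv$ is SPD — so that $AM\inv$ is similar to the SPD matrix $A^{1/2}M\inv A^{1/2}$ and all its eigenvalues are strictly positive. Positive definiteness of $M\inv$ is exactly Proposition~\ref{prop:spd-2}, which requires $\rho(U_k^T E V_k) < 1$. This spectral radius condition needs to be verified from the SPD assumption on $A$: since $A$ is SPD so is $A_0 = A + EE^T$, and then $\rho(U_k^T E V_k)<1$ should follow from the fact that $H = E^T A_0\inv E = V \Lambda_H V^T$ has all eigenvalues in $[0,1)$ by Lemma~\ref{lem:eigH}, because $U_k^T E V_k$ is a compression of $U^T E V = U^T E V$ and the relation $A_0\inv E = UV^T$ ties its singular values to those of $H$ — indeed $V^T E^T A_0\inv E V = V^T E^T U V^T \cdot$ ... so $U^T E V$ has the same eigenvalues as a symmetric matrix whose spectrum is controlled by $\Lambda_H$. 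Concretely, $E^T A_0 \inv E = V U^T E = U^T E V$ read the right way, giving $\rho(U^T E V) = \rho(H) < 1$, and compression by $V_k$ cannot increase the spectral radius of a symmetric matrix.

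The main obstacle is the bookkeeping in the second paragraph: one must be careful that the symmetric matrix $U_k^T E V_k$ really does have spectral radius bounded by $\rho(H)$, since $U_k$ and $V_k$ are the leading singular subspaces of $A_0\inv E$ and the identification of $U_k^T E V_k$ with a principal submatrix of a symmetric matrix requires using $A_0\inv E = U V^T$ carefully (the SVD couples the left and right factors, and one has to check $U^T E V$ is symmetric, which follows since $U^T E V = U^T E V = (A_0\inv E)^+ A_0^{-1/2} \cdot A_0^{1/2} E V$-type manipulation, or more cleanly from $V \Sigma U^T = E^T A_0\inv = E^T A_0\inv$, giving $E^T A_0\inv E = V\Sigma U^T E$ and also $= E^T U \Sigma V^T{}^{-1}\cdots$). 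Once that symmetry and the spectral-radius bound are pinned down, the rest is the routine congruence/Rayleigh-quotient argument already used verbatim in Theorem~\ref{thm:eigs}, together with Lemma~\ref{lem:S} for the positive-semidefiniteness of the correction term, so I would lean on those two results and keep the new computation short.
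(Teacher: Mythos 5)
Your first paragraph is the paper's proof: subtract \eqref{eq:Mtwosided} from \eqref{eq:Ainv1}, invert the block matrix via the Schur complement $S_k$ of \eqref{eq:SS}, and observe that after the bare $H_k$ cancels, the remaining correction is a congruence of an SPSD matrix. Your rank-$(s-k)$ factorization of that correction through $S_k\inv$ is exactly the same object as the paper's congruence with $\bar S_k$ from \eqref{eq:Sbar} (and is written a little more economically), and together with Lemma~\ref{lem:S} it gives $A\inv \succeq M\inv$, hence $\eta_i \le 1$.

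Where you genuinely add something is the lower bound. The paper asserts $0<\eta_i$ from the SPSD-ness of $\bar S_k$ alone, but that only delivers the upper bound; strict positivity requires $M\inv \succ 0$, i.e.\ $H_k=(I-U_k^TEV_k)\inv \succeq 0$, and your route through Proposition~\ref{prop:spd-2} is the right way to get it. However, your verification of $\rho(U_k^TEV_k)<1$ is garbled as written (the ``$(A_0\inv E)^+$'' manipulation does not parse). The clean version is one line: since $V$ is orthogonal and $A_0\inv E=UV^T$ as in \eqref{eq:svd}, we have $U=A_0\inv EV$, hence $U^TEV=V^TE^TA_0\inv EV=V^THV$ is symmetric and orthogonally similar to $H$, whose spectrum lies in $[0,1)$ by Lemma~\ref{lem:eigH}; $U_k^TEV_k$ is the leading $k\times k$ principal submatrix of $V^THV$, so Cauchy interlacing keeps its eigenvalues in $[0,1)$ and $\rho(U_k^TEV_k)<1$. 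With that step pinned down your argument is complete --- indeed slightly more complete than the paper's own treatment of the lower bound.
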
 
\begin{proof}
From \eqref{eq:Mtwosided} and \eqref{eq:Ainv1}, it follows by subtraction
that
\begin{align} \notag 
A\inv - M\inv &= \left(A_{0}^{-1}E\right)V
\left[
\begin{pmatrix}
H_{k}\inv & Z \\
Z^T & X
\end{pmatrix}\inv
-
\begin{pmatrix}
H_{k} & 0 \\
0 & 0
\end{pmatrix}
\right]
 V^{T}\left(E^{T}A_0\inv\right), \notag \\
&= \left(A_{0}^{-1}E\right)V
\begin{pmatrix}
H_kZS_k\inv Z^TH_k & -H_kZS_k\inv \\
-S_k\inv Z^TH_k & S_k\inv
\end{pmatrix}
V^{T}\left(E^{T}A_0\inv\right), \notag \\
&=\left(A_{0}^{-1}E\right)V
 \begin{pmatrix}
  H_kZ & 0 \\
  0 & S_k\inv
 \end{pmatrix}
\bar S_k
  \begin{pmatrix}
  Z^TH_k & 0 \\
  0 & S_k\inv
 \end{pmatrix}
 V^{T}\left(E^{T}A_0\inv\right), \notag
\end{align}
where $\bar S_k$ is defined in \eqref{eq:Sbar}, so that
\begin{align} \notag 
AM\inv = I- A\left(A_{0}^{-1}E\right)V
 \begin{pmatrix}
  H_kZ & 0 \\
  0 & S_k\inv
 \end{pmatrix}
\bar S_k
  \begin{pmatrix}
  Z^TH_k & 0 \\
  0 & S_k\inv
 \end{pmatrix}
 V^{T}\left(E^{T}A_0\inv\right).
\end{align}
Hence, the eigenvalues of $AM\inv$, $\eta_i$, satisfy $0< \eta_i \le 1$, since
$\bar S_k$ is SPSD, and
$(n-s+k)$ of these eigenvalues are equal to one. 
\end{proof}

\begin{figure}[ht]
\caption{DDLR-2: eigenvalues of $AM\inv$ with $k=5$ eigenvectors for a $900
\times
900$ 2-D Laplacian  with $4$ subdomains and $\alpha=1$. \label{fig:SpEgs2}}
\begin{center}
\includegraphics[width=0.8\textwidth]{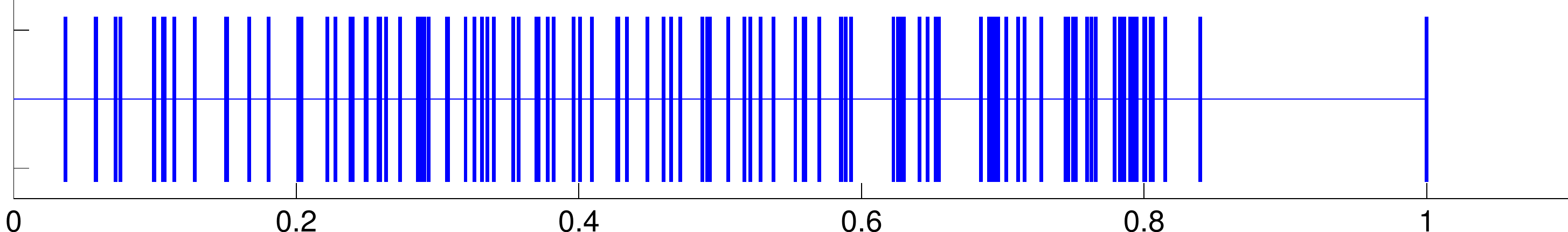}
\end{center}
\end{figure}

The spectrum of $AM\inv$ for the same matrix used for Figure~\ref{fig:SpEgs} is
shown in Figure~\ref{fig:SpEgs2}. Compared with the spectrum with the DDLR-1 method with $\theta=0$ shown in the left
part of Figure~\ref{fig:SpEgs}, the eigenvalues of the preconditioned matrix $AM\inv$ are more dispersed between
0 and 1 and the small eigenvalues are closer to zero. This suggests that the quality of the DDLR-2 preconditioner will be
lower than that of DDLR-1, which is supported by the numerical results in
Section~\ref{sec:exp}.

\section{Implementation} \label{sec:ParImp}
In this section, we  address the implementation details for building
and applying the DDLR preconditioner, especially focusing on the implementations
in a parallel/distributed environment.

\subsection{Building a DDLR preconditioner}
The construction of a DDLR preconditioner involves the following steps.
In the  first step, a graph partitioner is called on the adjacency graph to 
partition the domain. For each obtained subdomain, we separate the interior 
nodes and the interface nodes, and reorder the local matrix into
the form of \eqref{eq:locsys2}.
The second step is to build a solver for each $B_{i,\alpha} \equiv 
B_i+\alpha^{-2} E_i E_i^T$.
These two steps can be done in parallel. 
The third step is to build a solver for the \emph{global} matrix
$C_\alpha$.
We will focus on the solution methods for the linear systems with
$C_\alpha$ in Section \ref{sec:csolve}. The last step, which is the most
expensive one, is to compute the low-rank approximations. This will be
discussed  in Section \ref{sec:Lan}.

\subsection{Applying the DDLR preconditioner}
First, consider the DDLR-1 preconditioner \nref{eq:Mprec}, which we can rewrite 
as
\begin{equation}\label{eq:Mprec1}
M\inv = A_0\inv \left(I + E G_{k,\theta} \inv  E^T  A_0\inv \right).
\end{equation}
The steps involved in applying $M\inv$ to a vector $x$ are listed in Algorithm 
\ref{alg:prec1}.
The vector $u$ resulting from the last step will be the desired vector $u=M\inv
x$.
The solve with $A_0$ required in steps 1 and 5 of Algorithm \ref{alg:prec1},
can in turn be viewed
as consisting of the $p$ independent local solves with
$B_{i,\alpha}$ and the global solve with $C_\alpha\equiv C+\alpha^2  I$ as is
inferred from
\nref{eq:split}.
Recall that the matrix $C$, which has the block structure
\nref{eq:Cmat},
is the global interface matrix that couples
all the interface unknowns. So, solving
a linear system with $C_\alpha$ will require communication if 
$C_\alpha$ is assigned to different processors.
The multiplication with $E^T$ in step 2  transforms a vector of the
interior unknowns into a vector of
the interface unknowns. This can be likened to a \emph{descent} operation
that moves objects from a  ``fine'' space to a ``coarse'' space.
The multiplication with $E$ in step 4 performs the reverse operation,
which can be termed an \emph{ascent} operation, consisting of
going from the interface unknowns to the interior unknowns.
Finally, the operation with $G_{k,\theta}\inv $ in step 3 
involves all the interface unknowns, and it will also require communication.
In summary, there are essentially 4 types of operations: (1) the solve with 
$B_{i,\alpha}$; (2) the solve with $C_\alpha$;
(3) products with $E$ and $E^T$, which are dual of one another;
and (4) the application of $G_{k,\theta}\inv $ to vectors.

\begin{algorithm}[ht]
\caption{Preconditioning operations of the DDLR-1 preconditioner.
\label{alg:prec1}}
\begin{algorithmic}[1]
\STATE Solve: $A_0 z = x$  \hfill\COMMENT{$B_{i,\alpha}$ solves and $C_\alpha$
solve}
\STATE Compute: $y = E^T z$ \hfill\COMMENT{Interior unknowns to interface
neighbors}
\STATE Compute:  $w = G_{k,\theta}\inv y$  \hfill\COMMENT{Use \nref{eq:gkt}}
\STATE Compute: $v = Ew $ \hfill\COMMENT{Interface unknowns to interior
neighbors}
\STATE Solve: $A_0 u = x+v $   \hfill\COMMENT{$B_{i,\alpha}$ solves and
$C_\alpha$ solve}
\end{algorithmic}
\end{algorithm}

Next, consider the DDLR-2 preconditioner given by \eqref{eq:Form2a}.
Applying this preconditioner is much simpler, which consists of one solve with 
$A_0$ and a low-rank correction.
Communication will be required for applying the low-rank correction term,
$U_kH_kU_k^T$, to a vector because it involves all the unknowns.
We assume that the $k \times k$ matrix $H_k$ is stored on every processor.

Parallel implementations of the DDLR methods will depend on how the
interface unknowns are mapped to processors.
A few of the mapping schemes will be discussed in Section \ref{sec:pp}.

\subsection{The global solves with $\mathbf{C_\alpha}$} \label{sec:csolve}
This section addresses the solution methods for $C_\alpha$ required in both
the DDLR-1 and the DDLR-2 methods whenever solving a linear 
system with $A_0$ is needed. It is an important part of the computations,
especially for 
DDLR-1 as it takes place twice for each iteration.
In addition, it is a non-local computation and can be costly due to
the communication. 
An important characteristic of $C_\alpha$ is that it can be made strongly
diagonally dominant by selecting a proper scaling factor $\alpha$.
Therefore, the first approach one can think about is to use a few steps
of the Chebyshev iterations.
The Chebyshev method was used with a block Jacobi
preconditioner $D_\alpha$ consisting of all the local diagonal blocks
$C_i$ (see, e.g.,
\cite[\S 2.3.9]{templates} for the preconditioned Chebyshev method). 
An appealing property in the Chebyshev iterations is that no inner product is 
needed. This avoids communications among processors, which makes this method 
efficient in particular for distributed memory architectures \cite{Saad-book2}. 
The price one pays for avoiding communication is that this method
requires enough knowledge of the spectrum. 
Therefore, prior to the Chebyshev iterations, we performed a few steps of the
Lanczos iterations on the matrix pair $(C_\alpha,D_\alpha)$ \cite[\S 
9.2.6]{Saad-book1} for some 
estimates (not bounds) of the smallest and the largest eigenvalues.  
The safeguard terms used in \cite{zhousaad} were included in order to have
bounds of the spectrum (see \cite[\S 13.2]{parlettbook} for the definitions of 
these terms).

Another approach is to resort to an approximate 
inverse $X \approx C_\alpha\inv$, so that the solve with $C_\alpha$ will be 
reduced to a matrix vector product with $X$.
A simple scheme known as
the method of Hotelling and Bodewig \cite{Householder} is given by the iteration
$$
X_{k+1} = X_k (2 I - C_\alpha X_k).
$$
In the absence of dropping, this scheme squares the residual norm $\norm{I -
C_\alpha X_k}$ 
from one step to the next, so that it converges quadratically provided that
the initial guess $X_0$ is such that $\| I -C_\alpha X_0\|<1$ for some matrix
norm.
The global self-preconditioned minimal residual (MR) iterations were shown 
to have superior performance \cite{Chow-Saad-apinv}. 
We adopted this method to build an approximate inverse of $C_\alpha$. Given an
initial guess $X_0$, the self-preconditioned MR iterations can be obtained by
the
sequence of operations shown in Algorithm \ref{alg:mr}.
$X_0$ was selected as the inverse of the diagonal of
$C_\alpha$. The numerical dropping was performed by a dual threshold
strategy based on a drop tolerance and a maximum
number of nonzeros per column.

\begin{algorithm}[ht]
\caption{Self-preconditioned global MR iterations with dropping. \label{alg:mr}}
\begin{algorithmic}[1]
\STATE Compute: $R_k = I-C_\alpha X_k$ \hfill\COMMENT{residual}
\STATE Compute: $Z_k = X_kR_k$  \hfill\COMMENT{self-preconditioned residual}
\STATE Apply numerical dropping to $Z_k$
\STATE Compute: $\beta_k = \mathrm{tr}(R_k^TC_\alpha Z_k)/\left\Vert
C_\alpha Z_k\right\Vert_F^2$ \hfill\COMMENT{tr$(\cdot)$ denotes the trace}
\STATE Compute: $X_{k+1} = X_k + \beta_k Z_k $
\end{algorithmic}
\end{algorithm}

\subsection{Computation of low-rank approximations} \label{sec:Lan}
For the DDLR-1 method, we  use the Lanczos algorithm~\cite{lanczos} to
compute the low-rank
approximation to $E^TA_0\inv E$ that is of the form $U_k \Lambda_k U_k^T$.
For the DDLR-2 method, the low-rank approximation to
$A_0\inv E$ is of the form $U_kV_k^T$, which can be
computed by applying the Lanczos algorithm on
$E^TA_0^{-2}E$, where $V_k$ is computed and $U_k$ can be obtained by 
$U_k=A_0\inv E V_k$.  Alternatively, for the DDLR-2 method, we can  also use
the 
Lanczos 
bidiagonalization method~\cite[\S 10.4]{GVL-book} to compute $U_k$ and $V_k$ at the same time.
At each step of the Lanczos algorithm, a matrix-vector product is required. This means that for each step, we need to solve linear systems with $A_0$: one solve for the DDLR-1 method and two for the DDLR-2 method.

As is well-known, in the presence of rounding error,
orthogonality in the Lanczos procedure
is quickly lost and a form of reorthogonalization is needed in practice.
In our approach, the partial
reorthogonalization scheme \cite{parlett1979,simon1984} is used.
The cost of this step will not be an issue to the overall performance
when a small number of steps are performed to approximate a few
eigenpairs.
To monitor convergence of the computed eigenvalues,
we adopt the approach used in \cite{fangsaadlanczos}.
Let $\theta_{j}^{(m-1)}$ and $\theta_{j}^{(m)}$ be the Ritz values obtained
in two consecutive Lanczos steps, $m-1$ and $m$.
Assume that we want to approximate $k$ largest
eigenvalues and $k<m$.
Then with a preselected tolerance $\epsilon$, the desired eigenvalues are
considered to have converged if
\begin{equation}
  \left|\frac{\sigma_{m}-\sigma_{m-1}}{\sigma_{m-1}}\right|<\epsilon, \textrm{
  where }\sigma_{m-1}={\displaystyle \sum_{j=1}^{k}\theta_{j}^{(m-1)}} \textrm{
  and } \sigma_{m}={\displaystyle \sum_{j=1}^{k}\theta_{j}^{(m)}}.
  \label{eq:converge}
\end{equation}

\subsection{Parallel implementations: standard mapping} \label{sec:pp}
Considerations of the parallel implementations have been mentioned in the
previous
sections, which suggest several possible schemes for distributing the interface
unknowns. Before discussing these schemes, it will be helpful
to overview the issues at hand. Major computations in building and 
applying the DDLR preconditioners are the following:
\begin{enumerate}
\item[1.] solve with $B_{i,\alpha}$, \hfill (local)
\item[2.] solve with $C_\alpha$, \hfill (nonlocal)
 \item[3.] products with $E^T$ and $E$, \hfill (local)
 \item[4a.] for DDLR-1, applying $G_{k,\theta}\inv$ in \eqref{eq:gkt}, \hfill
(nonlocal)
 \item[4b.] for DDLR-2, products with $U_k$ and $U_k^T$, \hfill (nonlocal)
\item[5.] reorthogonalizations in the Lanczos procedure. \hfill (nonlocal)
\end{enumerate}
The most straightforward mapping we can consider might be to map the unknowns 
of each subdomain to a processor.
If $p$ subdomains are used, global matrices $A$ and $C_\alpha$ or its
approximate inverse $X$ are distributed among the $p$ processors. So, processor 
$i$ will hold $d_i+s_i$ rows of $A$ and $s_i$ rows of $C_\alpha$ or $X$, where 
$d_i$ is the number of the local interior unknowns and $s_i$ is the number of
the local 
 interface unknowns of subdomain~$i$.
In the DDLR-1 method, $U_k\in \RR^{s
\times k}$ is distributed such that processor $i$ will keep $s_i$ rows, while 
in the DDLR-2 method, $d_i+s_i$ rows of $U_k\in \RR^{n \times k}$ will reside in
processor $i$.
For all the nonlocal operations, communication is among all the $p$
processors. The operations labeled by (2.) and (4a.) involve 
interface  to interface  communication, while the operations
(4b.) and (5.) involve communication among all the unknowns. From another 
perspective, the communication
in (4a.), (4b.) and (5.) is of the all-reduction type required by vector inner
products, while the communication in (2.) is point-to-point such as
that in the distributed sparse matrix vector products.
If an iterative process is used for the solve with $C_\alpha$,
it is 
important to select $\alpha $ carefully so as to reach a compromise 
between the number of the inner iterations 
(each of which  requires 
communication) and the number of the outer iterations (each of which involves
solves
with $C_\alpha$). The scalar $\alpha$ will also play a role if an
approximate inverse is used, since the convergence of the MR iterations will be
affected.

\subsection{Unbalanced mapping: interface unknowns together} \label{sec:unbal}
Since communication is required among the interface nodes, an idea that comes 
to mind is to map the interior unknowns of each subdomain
to a processor, and all the interface unknowns to another separated one.
In a case of $p$ subdomains, $p+1$ processors will be used and  $A$ is 
distributed in such a way that processor $i$ owns
the  rows corresponding to the local interior
unknowns for $i=1,\ldots p$, while processor $p+1$ holds the 
rows related to all the interface unknowns.
Thus, $C_\alpha$ or $X$ will reside entirely on the processor 
$p+1$.

A clear advantage of this mapping is that the solve with $C_\alpha$ will 
require no communication.
However, the operations with $E$ and $E^T$ are no longer local.
Indeed, $E^T$ can be viewed as a restriction operator, which ``scatters''
interface data from
processor $p+1$ to the other $p$ processors.
Specifically, referring to \nref{eq:locsys2}, each $y_i$ will be sent to 
processor $i$ from processor $p+1$.
Analogously, the product with $E$, as a prolongation, will perform a dual 
operation that ``gathers'' from processors $1$ to $p$ to processor $p+1$.
In Algorithm~\ref{alg:prec1}, the scatter operation goes before step $2$ and 
the gather operation should be executed after step $4$. Likewise, if we store 
the
vectors in $U_k$ on processor $p+1$, applying $G_{k,\theta}$ will not require
communication but another pair of the ``gather-and-scatter'' operations will be 
needed before and after step 3.
Therefore, at each application of the DDLR-1 preconditioner,
\emph{two}
pairs of the scatter-and-gather operations for the interface
unknowns will be required. 
A middle ground approach is to distribute $U_k$ to processors $1$ to $p$ as it 
is in the standard mapping.
In this way, applying $G_{k,\theta}$ will require communication but only
\emph{one}
pair of the scatter-and-gather operations is necessary.
On the other hand, in the DDLR-2 method, the distribution of $U_k$
should be consistent with that of $A$.

The main issue with this mapping is that
it is hard to achieve load balancing in general. Indeed for
a good balancing, we need to have the interior unknowns of each subdomain and 
all the interface unknowns of
roughly the same size. However, this is difficult to achieve in practice.
The load balancing issue is further complicated by the  fact that 
the equations needed to be solved on processor $p+1$ are completely different
from those on
the other processors.
A remedy to the load balancing issue is to use $q$ processors instead of just 
one dedicated to the 
global interface (a total of $p+q$ processors used in all), which provides a 
compromise. Then, the communication required for solving with $C_\alpha$ and
applying 
$G_{k,\theta}$ is confined within the $q$ processors.

\subsection{Improving  a given preconditioner} \label{sec:improv}
One of the main weaknesses of standard, e.g., ILU-type, preconditioners
is that they are difficult to update. For example, suppose we 
compute a preconditioner  to a given matrix  and find
that it is not accurate enough to yield convergence. In the 
case of ILU we would have essentially to start from the beginning.
For DDLR, improving a given preconditioner is essentially trivial.
For example, the heart of DDLR-1 consists of obtaining a low-rank
approximation the  matrix $G$ defined in \nref{eq:Gmat}. 
Improving this approximation would consist in merely adding a few more
vectors (increasing $k$) and this can be easily achieved in a number of
ways without having to throw away the vectors already computed.

\section{Numerical experiments} \label{sec:exp}
The experiments were conducted on Itasca, an HP ProLiant BL280c G6 Linux
cluster at Minnesota Supercomputing Institute, which has $2,186$ Intel Xeon
X5560 processors. Each processor has four cores, 8 MB
cache, and communicates with memory on a QuickPath Interconnect (QPI) interface.
An implementation of the DDLR preconditioners was
written in C/C++ with the Intel Math Kernel Library, the Intel MPI library
and PETSc \cite{petsc-user-ref,petsc-web-page,petsc-efficient}, compiled by the 
Intel MPI compiler using the -O3 optimization level.

The accelerators used were the conjugate gradient (CG)
method when both the matrix and the preconditioner are SPD, and 
the generalized minimal residual (GMRES) method with a restart dimension of $40$, denoted by
GMRES$(40)$, for the indefinite cases.
Three types of preconditioners were compared in our experiments: 1) the 
DDLR
preconditioners, 2) the pARMS method \cite{NLAA:pARMS},
and 3) the RAS preconditioner \cite{RAS} (with overlapping).
Recall that for an SPD matrix, the DDLR preconditioners  given by
\eqref{eq:Mprec} and \eqref{eq:Form2a} will also be SPD if the assumptions 
in
Propositions \ref{prop:spd-1} and \ref{prop:spd-2} are satisfied.
However, these propositions will not
hold when the solves with $A_0$ are approximate, which is
typical in practice.
Instead, the positive definiteness can be determined by checking if the
largest eigenvalue is less than one for DDLR-1
or by checking the positive definiteness of 
$H_k$ for DDLR-2.
DDLR-1 was always used with $\theta=\lambda_{k+1}$.

Each $B_{i,\alpha}$ was reordered by the
approximate minimum degree ordering (AMD) \cite{AmestoyDavisDuff96,
AmestoyDavisDuff04,Davis:2006:DMS:1196434} to reduce fill-ins 
and then we simply used an incomplete Cholesky or LDL factorization as the 
local solver. 
A more efficient and robust 
local
solver, for example, the ARMS approach in \cite{Saad-Suchomel-ARMS}, can lead to
better performance in terms of both the memory requirement and the speed. 
However, this has not been implemented in our current code.
A typical setting of the scalar $\alpha$ for $C_{\alpha}$ and $B_{i,\alpha}$  is
$\alpha=1$, which in general gives the best overall
performance, the exceptions being the three cases shown in Section
\ref{sec:gen}, for which choosing $\alpha > 1$ improved the 
convergence.
Regarding the solves with $C_{\alpha}$, using the approximate inverse is 
generally more efficient than the Chebyshev iterations, especially in the 
iteration phase.
However, computing the approximate inverse can be costly, in particular for the 
indefinite 3-D cases. The standard mapping was
adopted unless specially stated, which in general gave 
better performance than the unbalanced mapping. The behavior of these two 
types of mappings will be
analyzed by the results in Table~\ref{tab:mapping}.
In the Lanczos algorithm, the convergence was checked every $10$
iterations and the  tolerance $\epsilon$ in \eqref{eq:converge} used for the 
checking 
was $10^{-4}$. In addition, the maximum 
number of the Lanczos steps was five times the number of the requested 
eigenvalues.

For pARMS, the ARMS method was used to be the local preconditioner and the 
Schur 
complement method was used as the global preconditioner, where the reduced 
system was solved by a few inner Krylov subspace iterations preconditioned by 
the block-Jacobi preconditioner. For the details of these options in pARMS, we 
refer the readers to \cite{parms2002,Saad-Suchomel-ARMS}.
We point out  that when the inner iterations are enabled,
flexible Krylov subspace methods will be required for the outer iterations, 
since the preconditioning
is no longer fixed from one outer iteration to the next. So, the flexible 
GMRES~\cite{fgmres} was used.
For the RAS method, ILU($k$) was used as the local solver, and a one-level 
overlapping between subdomains was used.
Note that the RAS preconditioner is nonsymmetric even for a symmetric matrix, 
so that GMRES was used with it.

We first report on the results of solving the linear
systems
from a 2-D and a 3-D  PDEs on regular meshes.
Next, we will show the results for solving a sequence of
general sparse symmetric linear systems.
For all the problems, a parallel multilevel $k$-way graph partitioning algorithm
from Par\textsc{Metis} \cite{METIS-SIAM,Karypis199871} was used for the DD.
Iterations were stopped whenever the residual norm had been
reduced by $6$ orders of magnitude or the maximum number of iterations allowed,
which is $500$, was exceeded.
The results are shown in Tables \ref{tab:spd2d3d}, 
\ref{tab:indef2d3d} 
and
\ref{tab:general}, where
all timings are reported in seconds and `\texttt{F}' indicates non-convergence 
within the maximum allowed
number of steps.
When comparing the preconditioners, the following factors are considered:
1) fill-ratio, i.e., the ratio of the number of nonzeros required to store
    a preconditioner to the number of nonzeros in the original matrix,
2) time for building preconditioners,
3) the number of iterations and
4) time for the iterations.

\subsection{Model problems}
We examine a 2-D and a 3-D PDE,
\begin{align} \label{eq:2d3d-pde}
-\Delta u-cu &= f\:\textrm{ in }\Omega, \notag \\
u &= 0 \textrm{ on } \partial \Omega,
\end{align}
where $\Omega = \left(0,1\right)^2$ or $\Omega =
\left(0,1\right)^3$, 
and $\partial \Omega$ is the boundary.
We take the $5$-point (or $7$-point) centered difference approximation.
To begin with, we solve \eqref{eq:2d3d-pde} with $c=0$. The matrix is SPD, so 
that we use DDLR with CG.
Numerical experiments were carried out to compare the performance of DDLR
with those of pARMS and RAS. The results are shown in
Table \ref{tab:spd2d3d}.
The mesh sizes, the number of processors (Np), the rank (rk),
the fill-ratios (nz), the numbers of iterations
(its), the time for building the preconditioners (p-t) and the time
for iterations (i-t)  are  tabulated.
We tested the problems on 6 2-D and 6 3-D meshes of increasing sizes, 
where the number of processors was growing proportionally such that the problem 
size on each processor was kept roughly the same. 
This can serve as a \emph{weak scaling} test.
We increased the rank $k$ used in DDLR with the meshes sizes.
The fill-ratios of DDLR-1 and pARMS were controlled to be 
roughly equal, whereas the fill of DDLR-2 was much higher, which 
comes mostly from the matrix $U_k$ when $k$ is large.
For pARMS, the inner Krylov subspace dimension used was $3$.

The time for building DDLR is much higher and it grows with the rank and 
the number of the processors. In contrast, the time to build pARMS and 
RAS is roughly constant.
This set-up time for DDLR  is typically dominated by
the Lanczos algorithm, where solves with $B_{i,\alpha}$ and $C_\alpha$ are
required at each iteration. Moreover, when $k$
is large, the cost of reorthogonalization becomes significant.
As shown in Table \ref{tab:spd2d3d},  DDLR-1  and  pARMS
 were more robust as they succeeded for all the 2-D and 3-D cases, while
 DDLR-2  failed for the largest 2-D case and  RAS 
failed for the three largest ones.
For most of the 2-D problems, DDLR-1/CG  achieved convergence in the
fewest iterations and the best iteration time.
For the 3-D problems, DDLR-1 required more iterations but a 
performance gain was still achieved  in terms of the reduced 
iteration time. Exceptions were the two largest 3-D problems, where RAS/GMRES 
yielded the best iteration time.

\begin{table}[h]
\caption{Comparison between {\rm DDLR}, {\rm pARMS} and {\rm RAS} preconditioners for
solving SPD linear systems from the 2-D/3-D PDE with the {\rm 
CG} or {\rm GMRES(40)} method. \label{tab:spd2d3d}}
\begin{center}
\begin{tabular}{r|r|ccrcc|ccrcc}
\hline
\multirow{2}{*}{Mesh}  & \multirow{2}{*}{Np}  &
\multicolumn{5}{c|}{DDLR-1} &
\multicolumn{5}{c}{DDLR-2}\tabularnewline
 &  & rk  & nz  & its  & p-t  & i-t  & rk  & nz  & its  & p-t  &
i-t\tabularnewline
\hline
$128^{2}$  & 2  & 8  & 6.6  & 15  & .209  & .027  & 8  & 8.2  & 30  & .213  &
.031\tabularnewline
$256^{2}$  & 8  & 16  & 6.6  & 34  & .325  & .064  & 16  & 9.7  & 69  & .330  &
.083\tabularnewline
$512^{2}$  & 32  & 32  & 6.8  & 61  & .567  & .122  & 32  & 13.0  & 132  & .540
& .194 \tabularnewline
$1024^{2}$  & 128  & 64  & 7.0 & 103 & 1.12 & .218 & 64  & 19.3 & 269 & 1.03 &
.570 \tabularnewline
$1448^{2}$  & 256  & 91  & 7.2 & 120 & 1.67 & .269 & 91  & 24.7 & 385 & 1.72 &
1.05 \tabularnewline
$2048^{2}$  & 512  & 128  & 7.6 & 168 & 3.02 & .410 & 128  & 32.2 & \tF & - &
- \tabularnewline
\hline
$25^{3}$  & 2  & 8  & 7.2  & 11  & .309  & .025  & 8  & 8.3  & 17  & .355  &
.021 \tabularnewline
$50^{3}$  & 16  & 16  & 7.5  & 27  & .939  & .064  & 16  & 9.3 & 52  & .958  &
.076\tabularnewline
$64^{3}$  & 32  & 16  & 7.4  & 36  & 1.06  & .089  & 16  & 9.2 & 67  & 1.07  &
.102 \tabularnewline
$100^{3}$  & 128  & 32  & 8.0  & 52 & 1.57 & .136  & 32  & 11.5  & 101 & 1.48  &
.190\tabularnewline
$126^{3}$  & 256  & 32  & 8.2  & 65 & 2.07 & .178  & 32  & 12.5  & 126 & 1.87
& .265 \tabularnewline
$159^{3}$  & 512  & 51  & 8.7  & 85 & 2.92 & .251  & 51  & 14.2  & 156 & 2.50
& .387 \tabularnewline
\hline
\end{tabular}

\vspace{1em}

\begin{tabular}{r|r|ccrcc|ccrcc}
\hline
\multirow{2}{*}{Mesh}  & \multirow{2}{*}{Np}  & \multicolumn{5}{c|}{pARMS} &
\multicolumn{5}{c}{RAS}\tabularnewline
 &  & \phantom{rk}  & nz  & its  & p-t  & i-t  & \phantom{rk}  & nz  & its  &
p-t  & i-t\tabularnewline
\hline
$128^{2}$  & 2  & \phantom{8}  & 6.7  & 15  & .062  & .037  & \phantom{8}  &
\phantom{1}2.7 & 40 & .003 & .032 \tabularnewline
$256^{2}$  & 8  & \phantom{16}  & 6.7  & 30  & .066 & .082 & \phantom{16}  &
\phantom{1}2.7
 & 102 & .004 & .072 \tabularnewline
$512^{2}$  & 32  & \phantom{32}  & 6.9  & 52  & .072 & .194 & \phantom{32} &
\phantom{1}2.7
 & 212 & .005 & .157 \tabularnewline
$1024^{2}$  & 128  & \phantom{64}  & 6.6 & 104 & .100 & .359 & \phantom{64} &
\phantom{1}2.7
 & \tF & .008 & - \tabularnewline
$1448^{2}$  & 256  & \phantom{64}  & 6.6 & 247 & .073 & .820 & \phantom{64} &
\phantom{1}2.7
 & \tF & .011 & - \tabularnewline
$2048^{2}$  & 512  & \phantom{128}  & 6.8 & 282 & .080 & 1.06 & \phantom{128}
& \phantom{1}2.7 & \tF & .015 & - \tabularnewline
\hline
$25^{3}$  & 2  & \phantom{8}  & 7.3  & 9  & .100  & .032  & \phantom{8} &
\phantom{1}5.9
& 13 & .004 & .041 \tabularnewline
$50^{3}$ & 16  & \phantom{16}  & 8.1  & 17 & .179 & .095 & \phantom{16} &
\phantom{1}6.7
& 28 & .006 & .071 \tabularnewline
$64^{3}$  & 32  & \phantom{16}  &  8.2 & 20 & .142 & .121 & \phantom{16} &
\phantom{1}6.7
& 34 & .007 & .103 \tabularnewline
$100^{3}$  & 128  & \phantom{32}  & 8.3 & 29 & .170 & .198 & \phantom{32} &
\phantom{1}6.7
& 51 & .011 & .148 \tabularnewline
$126^{3}$  & 256  & \phantom{32}  & 8.4 & 34 & .166 & .216 & \phantom{32} &
\phantom{1}6.7
& 60 & .014 & .127 \tabularnewline
$159^{3}$  & 512  & \phantom{111}  & 8.5 & 40 & .179 & .275 & \phantom{111} &
\phantom{1}6.7 & 83 & .019 & .183 \tabularnewline
\hline
\end{tabular}
\end{center}
\end{table}

Next, we consider solving symmetric indefinite problems by setting $c>0$ in
\eqref{eq:2d3d-pde},
which corresponds to shifting the discretized negative Laplacian
 by subtracting  $\sigma I$ with a certain
$\sigma>0$.
In this set of experiments, we reduce the size of the shift as the
problem size increases in order to make the problems fairly difficult but not
too difficult to solve for all the methods.
We used higher ranks in the two DDLR methods and a higher inner iteration 
number, which was $6$, in pARMS. 
Results are reported in Table~\ref{tab:indef2d3d}.
From there  we can  see that DDLR-2 did
not perform well as it failed for almost all
the problems. Second, RAS failed for all the 2-D cases and
three 3-D cases. But for the  three cases where it worked, it 
yielded the best iteration time. Third,  DDLR-1  achieved convergence
in all the cases whereas  pARMS failed for two 2-D cases. Comparison
between DDLR-1 and pARMS shows a similar result as in the previous set of 
experiments:
for the 2-D cases, DDLR-1 required fewer iteration and less
iteration time,
while for the 3-D cases, it might require more iterations but still less
iteration time.

\begin{table}[h]
\caption{Comparison between {\rm DDLR}, {\rm pARMS} and
{\rm RAS} preconditioners for
solving symmetric indefinite linear systems from the 2-D/3-D  PDEs with 
the {\rm GMRES(40)} method.
\label{tab:indef2d3d}}
\begin{center}
 \small
\begin{tabular}{r|r|c|ccrcc|ccrcc}
\hline
\multirow{2}{*}{Mesh}  & \multirow{2}{*}{Np}  & \multirow{2}{*}{$\sigma$} &
\multicolumn{5}{c|}{DDLR-1} & \multicolumn{5}{c}{DDLR-2}\tabularnewline
 &  &  & rk  & nz  & its  & p-t  & i-t  & rk  & nz  & its  & p-t  &
i-t\tabularnewline
\hline
$128^{2}$  & 2  & 1e-1  & 16  & 6.8  & 18  & .233  & .034  & 16  & 13.2  &
146 & .310  & .234 \tabularnewline
$256^{2}$  & 8  & 1e-2 & 32  & 6.8  & 38  & .674  & .080  & 16  & 13.0  & \tF
& 1.01 & -\tabularnewline
$512^{2}$  & 32  & 1e-3 & 64  & 7.1  & 48  & 1.58  & .105  & 64  & 19.4  &
\tF & 1.32  & - \tabularnewline
$1024^{2}$  & 128 & 2e-4 & 128  & 7.6 & 68 & 4.15 & .160 & 128  & 32.3 & \tF
& 4.45 & - \tabularnewline
$1448^{2}$  & 256  & 5e-5 & 182  & 8.1 & 100 & 7.14 & .253 & 182  & 43.2 &
\tF & 7.77 & - \tabularnewline
$2048^{2}$  & 512  & 2e-5 & 256  & 8.8 & 274 & 12.6 & .749 & 256  & 58.4 &
\tF & 13.1 & - \tabularnewline
\hline
$25^{3}$  & 2  & .25 & 16  & 8.3  & 29  & .496  & .099  & 16  & 9.6  & 62  &
.595  & .130 \tabularnewline
$50^{3}$  & 16  & 7e-2 & 32  & 8.2  & 392  & 1.38  & 1.19  & 32  & 10.2  &
\tF & 1.66 & -\tabularnewline
$64^{3}$  & 32  & 3e-2 & 64  & 8.9 &  201 &  2.26 & .688  & 64  & 16.3  & \tF
& 2.08 & - \tabularnewline
$100^{3}$  & 128  & 2e-2 & 128 & 11.4 & 279 & 5.17 & 1.08 & 128 & 28.7 & \tF &
5.29 & - \tabularnewline

$126^{3}$  & 256  & 7e-3 & 128 & 12.8 & 255 & 5.85 & 1.10 & 128 & 28.3 & \tF &
6.01 & - \tabularnewline

$159^{3}$  & 512  & 5e-3 & 160 & 13.5 & 387 & 8.60 & 1.71 & 160 & 33.0 & \tF &
8.33 & - \tabularnewline
\hline
\end{tabular}

\vspace{1em}

\begin{tabular}{r|r|c|ccrcc|ccrcc}
\hline
\multirow{2}{*}{Mesh}  & \multirow{2}{*}{Np}  & \multirow{2}{*}{$\sigma$}  &
\multicolumn{5}{c|}{pARMS} & \multicolumn{5}{c}{RAS}\tabularnewline
 &  &  & \phantom{rk}  & nz  & its  & p-t  & i-t  & \phantom{rk}  & nz  & its  &
p-t  & i-t\tabularnewline
\hline
$128^{2}$  & 2  & 1e-1  & \phantom{32}  & 11.4  & 76  & .114  & .328  &
\phantom{8}  & \phantom{1}2.7 & \tF & .003 & - \tabularnewline
$256^{2}$  & 8  & 1e-2 & \phantom{16}  & 13.9  & \tF  & - & -
& \phantom{16}  & \phantom{1}2.7 & \tF & .004 & - \tabularnewline
$512^{2}$  & 32  & 1e-3 & \phantom{32}  & 12.3  & 298  & .181 & 1.53 &
\phantom{32} & \phantom{1}2.7 & \tF & .005 & - \tabularnewline
$1024^{2}$  & 128  & 2e-4 & \phantom{64}  & 12.5 & 232 & .230 & 1.46 &
\phantom{64} & \phantom{1}2.7 & \tF & .008 & - \tabularnewline
$1448^{2}$  & 256 & 5e-5 & \phantom{64}  & 12.5 & \tF & - & - &
\phantom{64} & \phantom{1}2.7 & \tF & .011 & - \tabularnewline
$2048^{2}$  & 512 & 2e-5 & \phantom{64 }  & 12.6 & 314 & .195 & 2.13 &
\phantom{64 } & \phantom{1}2.7 & \tF & .015 & - \tabularnewline
\hline
$25^{3}$  & 2  & .25  & \phantom{16}  & 8.3  & 100  & .156  & .599  &
\phantom{16}  & \phantom{1}5.9 & 108 & .004 & .123 \tabularnewline
$50^{3}$  & 16  & 7e-2 & \phantom{16}  & 8.9  & 448  & .142 & 2.59 &
\phantom{16} & \phantom{1}6.7 & \tF & .006 & - \tabularnewline
$64^{3}$  & 32 & 3e-2 & \phantom{16}  &  8.9 & 130  & .115 & .784 &
\phantom{16} & \phantom{1}6.7 & 252 & .007 & .375 \tabularnewline

$100^{3}$  & 128 & 2e-2 & \phantom{128}  & 9.4 & 187 & .137 & 1.24 &
\phantom{32} & \phantom{1}6.7 & 343 & .011 & .541 \tabularnewline

$126^{3}$  & 256  & 7e-3 & \phantom{128}  & 10.6 & 340 & .137 & 2.74 &
\phantom{32} & \phantom{1}6.7 & \tF & .014 & - \tabularnewline

$159^{3}$  & 512  & 5e-3 & \phantom{128}  & 10.8 & 329 & .148 & 2.85 &
\phantom{312} & \phantom{1}6.7 & \tF & .019 & - \tabularnewline
\hline
\end{tabular}
\end{center}
\end{table}

In all the previous tests, DDLR-1  was used with the
standard mapping. In the next set of experiments, we  examined the behavior
of the unbalanced mapping discussed in Section
\ref{sec:unbal}. In these experiments, we tested the problem on a  $128 \times 
128$ mesh
and a
 $25 \times 25 \times 25$ mesh. Both of them were divided into $128$ subdomains.
Note here that the problem size per processor is remarkably small. This was made
on purpose since it can make the communication cost more significant (and
likely to be dominant) in the overall cost for the solve with $C_\alpha$ such 
that it can make the effect of the unbalanced mapping more prominent.
Table \ref{tab:mapping} lists the iteration time for solving the SPD problem 
using the standard mapping and the unbalanced mapping with different settings.
Two solution methods for $C_{\alpha}$ were tested, the
one with the
approximate inverse and the preconditioned Chebyshev iterations (5 iterations
were used per solve). In the
unbalanced mapping, $q$ processors were used dedicated to the interface
unknowns ($p+q$ processors were used totally). 
The unbalanced mapping was
tested with 8 different $q$ values from $1$ to $96$. $q=1$ is a 
special case
where no communication is involved in the solve with $C_\alpha$. The matrix $U_k$ 
was stored on the $p$ processors, so that only
one pair of the scatter-and-gather communication was required at each outer 
iteration as
discussed in Section \ref{sec:unbal}.
The standard mapping is indicated by $q=0$.

\begin{table}[ht]
\caption{Comparison of the iteration time (in
milliseconds) between the standard mapping and the
unbalanced
mapping for solving 2-D/3-D SPD PDE problems by the DDLR-1-CG method. \label{tab:mapping}}
\begin{center}
\begin{tabular}{c|c|ccccccccc}
\hline
Mesh & $C_{\alpha}^{-1}$ & $q=0$ & 1 & 2 & 4 & 8 & 16 & 32 & 64 &
96\tabularnewline
\hline
\multirow{2}{*}{$128^{2}$} & \texttt{AINV} & 9.0 & 53.4 & 27.8 & 15.4  &
13.9 & 10.8  & \textbf{9.1}  & 9.4  & 13.5\tabularnewline
 & \texttt{Cheb} & 9.2 & 116.8  & 60.7  & 29.9  & 15.8  & 10.7
& 10.0  & \textbf{8.3}  & 9.1 \tabularnewline
\hline
\multirow{2}{*}{$25^{3}$} & \texttt{AINV} & 15.1 & 119.7 & 66.3 & 34.7
& 26.0 & 19.2 & 17.2 & \textbf{14.8} & 18.2 \tabularnewline
 & \texttt{Cheb} & 13.8 & 368.3 & 166.0 & 78.3 & 38.5 & 20.4 & 14.4 &
\textbf{12.3} & 13.5\tabularnewline
\hline
\end{tabular}
\end{center}
\end{table}

As the results indicated, the iteration time kept decreasing at
beginning as $q$ increased but after some
point it started to increase. This is a typical situation corresponding to
the balance between communication and computation: when $q$ is small, the 
amount of
computation on each of the $q$ processors is high and it dominates the overall
cost, so that the overall cost will keep being reduced as $q$ increases until 
the point when
the communication cost starts to affect the overall performance. The optimal
numbers of the interface processors that yielded the best iteration time are 
shown in bold in Table \ref{tab:mapping}. For these two cases, the optimal 
iteration time with the unbalanced mapping was
slightly better than that with the standard mapping. However,
we need to point out that this is not a typical case in practice. For all the 
other tests in this section, we used the standard mapping in the 
DDLR-1 preconditioner.

\subsection{General matrices} \label{sec:gen}

We selected $12$ symmetric matrices from the University of Florida sparse matrix
collection \cite{Davis:UFM} for the following tests. Table 
\ref{tab:matrices}
lists the name, the order (N), the number of nonzeros (NNZ), and a short 
description for
each matrix. If the actual right-hand side is not provided, an 
artificial one was created as $b=Ae$, where $e$ is a random 
vector.
 
\begin{table}[h]
\caption{Names, orders (N), numbers of nonzeros (NNZ)
and short descriptions of the test matrices.}
\begin{center}
\begin{tabular}{l|r|r|r}
\hline
\multicolumn{1}{c|}{MATRIX} & \multicolumn{1}{c|}{N} & \multicolumn{1}{c|}{NNZ}
& \multicolumn{1}{c}{DESCRIPTION}\tabularnewline
\hline
Andrews/Andrews  & 60,000    & 760,154   &  computer graphics problem
\tabularnewline
UTEP/Dubcova2    & 65,025    & 1,030,225 &  2-D/3-D PDE problem
\tabularnewline
Rothberg/cfd1    & 70,656    & 1,825,580 &  CFD problem \tabularnewline
Schmid/thermal1  & 82,654    & 574,458   &  thermal problem \tabularnewline
Rothberg/cfd2    & 123,440   & 3,085,406 &  CFD problem \tabularnewline
UTEP/Dubcova3    & 146,689   & 3,636,643 &  2-D/3-D PDE problem \tabularnewline
Botonakis/thermo\_TK & 204,316  & 1,423,116 &  thermal problem
\tabularnewline
Wissgott/para\_fem & 525,825 & 3,674,625 & CFD problem \tabularnewline
CEMW/tmt\_sym & 726,713 & 5,080,961 &  electromagnetics problem
\tabularnewline
McRae/ecology2 & 999,999 & 4,995,991 &  landscape ecology problem
\tabularnewline
McRae/ecology1 & 1,000,000 & 4,996,000 &  landscape ecology problem
\tabularnewline
Schmid/thermal2  & 1,228,045 & 8,580,313 &  thermal problem \tabularnewline
\hline
\end{tabular}
\end{center}
\label{tab:matrices}
\end{table}

Table~\ref{tab:general} shows the results  for each 
problem.
DDLR-1 and DDLR-2  were used with GMRES($40$)
for three problems {\tt tmt\_sym}, {\tt ecology1} and {\tt ecology2},
where the preconditioners were found not to be SPD, while for
the other problems CG  was applied.
We set the scalar $\alpha=2$  for
two problems {\tt ecology1} and {\tt ecology2}, where it turned out to reduce
the numbers of iterations, but for elsewhere we use $\alpha=1$.
As shown by the results, DDLR-1 achieved convergence
for all the cases, whereas the other three preconditioners all had 
failures for a few cases.
Similar to the experimental results for the model problems, the DDLR 
preconditioners required more time to construct. Compared with  
pARMS and RAS, DDLR-1 achieved
time savings in the iteration phase for 7 (out of 12) problems and 
DDLR-2 did so for 4 cases.

\begin{table}[h]
\caption{Comparison among {\rm DDLR}, {\rm pARMS} and {\rm RAS}
  preconditioners for solving general sparse symmetric linear systems
  along with  {\rm CG} or  {\rm GMRES$(40)$}. \label{tab:general}}
\begin{center}
\begin{tabular}{l|r|ccrcc|ccrcc}
\hline
\multirow{2}{*}{Matrix}  & \multirow{2}{*}{Np}  & \multicolumn{5}{c|}{DDLR-1} &
\multicolumn{5}{c}{DDLR-2}\tabularnewline
 &  & rk  & nz  & its  & p-t  & i-t  & rk  & nz  & its  & p-t  &
i-t\tabularnewline
\hline
Andrews  & 8  & 8  & 4.7  & 33  & .587  & .220  & 8  & 5.2  & 53  & .824 &
.175\tabularnewline
Dubcova2  & 8  & 16  & 3.5  & 18  & .850  & .054  & 16 & 4.5 & 44  & .856
& .079 \tabularnewline
cfd1  & 8  & 8  & 18.1 & 17 & 7.14 & .446 & 8 & 18.4 & 217 & 6.44 & 2.97
\tabularnewline
thermal1  & 8 & 16 & 6.0  & 48  & .493  & .145 &  16 &  8.3 & 126 & .503 &
.234 \tabularnewline
cfd2  & 16  & 8 & 13.2 & 12 & 4.93  & .232  &  8 & 13.4 & \tF  &  5.11 & -
\tabularnewline
Dubcova3 & 16  & 16 & 2.6 & 16 & 1.70 & .061 & 16 & 3.2 & 44 & 1.71 & .107
\tabularnewline
thermo\_TK & 16  & 32  & 6.4 & 24 & .568 & .050 & 32  & 10.8 & 63
& .537 & .096
\tabularnewline
para\_fem  & 16  & 32 & 7.8 & 59 & 4.02 & .777 & 32 & 12.3 & 159
& 4.12 & 1.35
\tabularnewline
tmt\_sym  & 16  & 16 & 7.3 & 33 & 5.56 & .668 & 16 & 9.5 & 62 & 5.69 &
.790 \tabularnewline
ecology2  & 32  & 32 & 8.9 & 39 & 3.67  & .433 & 32 & 15.2 & 89
& 3.79 & .709
\tabularnewline
ecology1  & 32  & 32  & 8.8 & 40 & 3.48 & .423 & 32  & 15.1
& 82 & 3.59 & .656
\tabularnewline
thermal2 & 32 & 32 & 6.8 & 140 & 5.06 & 2.02 & 32 & 11.3 & \tF & 5.11 & -
\tabularnewline
\hline
\end{tabular}

\vspace{1em}

\begin{tabular}{r|r|ccrcc|ccrcc}
\hline
\multirow{2}{*}{Matrix}  & \multirow{2}{*}{Np}  & \multicolumn{5}{c|}{pARMS} &
\multicolumn{5}{c}{RAS}\tabularnewline
 &  & \phantom{rk}  & nz  & its  & p-t  & i-t  & \phantom{rk}  & nz  & its  &
p-t  & i-t\tabularnewline
\hline
Andrews  & 8  & \phantom{8}  & 4.3  & 15  & .217  & .109  & \phantom{8}  & 3.6
& 19 & .010 & .073 \tabularnewline
Dubcova2  & 8  & \phantom{32}  & 3.5  & 25  & .083 & .090 & \phantom{32}  & 3.5
& 43 & .008 & 0.11 \tabularnewline
cfd1  & 8 & \phantom{64}  & 16.1 & \tF & .091 & - & \phantom{64} & 10.6
& 153 & .013 & 3.55 \tabularnewline
thermal1  & 8 & \phantom{8}  & 5.4  & 39  & .089  & .153  & \phantom{8}  & 4.6
& 156 & .006 & .235 \tabularnewline
cfd2  & 16  & \phantom{16}  & 26.0  & \tF & .120 & - & \phantom{16}  & 11.9
& 310 & .012 & 3.26 \tabularnewline
Dubcova3 & 16  & \phantom{32}  & 2.6 & 37 & .130 & .200 & \phantom{32}  & 4.2
& 39 & .013 & .212 \tabularnewline
thermo\_TK & 16  & \phantom{32}  & 4.9 & 16 & .048 & .035 & \phantom{32} & 5.5
& 34 & .004 & .067 \tabularnewline
para\_fem  & 16  & \phantom{32}  & 6.5 & 89 & .586 & 1.36 & \phantom{32} & 5.1
& 247 & .019 & 1.18 \tabularnewline
tmt\_sym  & 16  & \phantom{32}  & 6.9 & 16 & .587 & .361 & \phantom{32}  & 3.7
& 26 & .026 & .222 \tabularnewline
ecology2  & 32  & \phantom{32}  & 9.9 & 15 & .662 & .230 & \phantom{32}  & 5.8
& 28 & .017 & .165 \tabularnewline
ecology1  & 32  & \phantom{32}  & 10.0 & 14 & .664 & .220 & \phantom{32}  & 5.8
& 27 & .017 & .161 \tabularnewline
thermal2  & 32  & \phantom{32}  & 6.1 & 205 & .547 & 3.70 & \phantom{32}  & 4.7
& \tF & .025 & -\tabularnewline
\hline
\end{tabular}
\end{center}

\end{table}

\section{Conclusion} \label{sec:concl}

This paper  presented a  preconditioning method for  solving distributed
symmetric sparse  linear  systems, based on an approximate inverse of the
original matrix which exploits the domain decomposition method and low-rank
approximations.  Two low-rank approximation strategies are discussed,
called  DDLR-1 and DDLR-2.
In terms of the number of iterations and iteration  time,
experimental results  indicate that  for SPD
systems,  the DDLR-1  preconditioner  can  be   an efficient
 alternative to other domain decomposition-type approaches such as one
based on  distributed Schur complements
(as in  pARMS), or on the RAS preconditioner.
Moreover,  this  preconditioner appears  to be  more
robust  than  the pARMS method and the RAS method for  indefinite
problems.  

The DDLR preconditioners require more time
to build than other DD-based preconditioners.
However, one must take a number of  other factors into account.
First, some improvements can be made to reduce the set-up time. 
For example, more efficient local solvers such as ARMS  can be used instead of the current ILUs; 
vector processing such as GPU computing can accelerate the computations of the low-rank 
corrections; and more efficient algorithms than the Lanczos method, e.g., randomized techniques \cite{halko-al-survey-11},
 can be exploited for computing the eigenpairs. 
Second, there are many applications
in which many systems with the same matrix must be solved. In this case
more expensive but more effective preconditioners may be
justified because their cost can be amortized. 
Finally, another important factor  touched upon briefly in 
Section~\ref{sec:improv} is that
the preconditioners discussed here are more easily
updatable than traditional ILU-type or DD-type preconditioners. 

\section*{Acknowledgements}
The authors  are grateful for resources from the University of Minnesota
Supercomputing Institute and for assistance with the computations.
The authors would like to thank the PETSc team for their help with the
implementation.

\bibliographystyle{siam}


\bibliography{local}
\end{document}